%
\input ./style/arxiv-vmsta.cfg
\documentclass[numbers,compress,v1.0.1]{vmsta}
\usepackage{eucal,mathrsfs}
\usepackage{mathbh}

\newtheorem{thm}{Theorem}
\newtheorem{lemma}{Lemma}
\newtheorem{cor}{Corollary}
\newtheorem{proposition}[thm]{Proposition}

\theoremstyle{definition}
\newtheorem{remark}{Remark}
\newtheorem{example}{Example}

\hyphenation{de-si-de-rium}


\newcommand{\real}{\mathbb{R}}
\newcommand{\rn}{{\mathbb{R}^n}}
\newcommand{\Ee}{\mathbb{E}}
\newcommand{\I}{\mathbh{1}}
\newcommand{\Pp}{\mathbb{P}}
\newcommand{\Sss}{\mathbb{S}}
\newcommand{\sq}{\mbox{\tiny$\square$} }
\newcommand{\Ff}{\mathcal{F}}

\volume{2}
\issue{2}
\pubyear{2015}
\firstpage{107}
\lastpage{129}
\doi{10.15559/15-VMSTA26}


\startlocaldefs
\urlstyle{rm}
\allowdisplaybreaks
\endlocaldefs

\begin{document}
\begin{frontmatter}

\title{On the Feynman--Kac semigroup for some Markov processes}
\author{\inits{V.}\fnm{Victoria}\snm{Knopova}}\email
{vicknopova@googlemail.com}


\address{V.M.\ Glushkov Institute of Cybernetics, NAS of Ukraine,\\
40, Acad.\ Glushkov Ave., 03187, Kiev, Ukraine}

\markboth{V. Knopova}{On the Feynman--Kac semigroup for some Markov processes}

\begin{abstract}
For a (non-symmetric) strong Markov process $X$, consider the
Feynman--Kac semigroup
\[
T_t^A f(x):= \Ee^x \bigl[ e^{A_t }
f(X_t) \bigr],\quad x\in\rn,\ t>0,
\]
where $A$ is a continuous additive functional of $X$ associated with
some signed measure. Under the assumption that $X$ admits a transition
probability density that possesses upper and lower bounds of certain
type, we show that the kernel corresponding to $T_t^A$ possesses the
density $p_t^A(x,y)$ with respect to the Lebesgue measure and construct
upper and lower bounds for $p_t^A(x,y)$.
Some examples are provided.
\end{abstract}

\begin{keyword}
Transition probability density \sep
continuous additive functional \sep
Kato class \sep
Feynman--Kac semigroup

\MSC[2010] Primary: 60J35 \sep
60J45 \sep60J55 \sep60J57 \sep60J75
\end{keyword}

\received{1 October 2014}
%
\revised{10 May 2015}
\accepted{26 May 2015}
\publishedonline{18 June 2015}
\end{frontmatter}

\numberwithin{equation}{section}

\section{Introduction}

Let $(X_t)_{t\geq0}$ be a Markov process with the state space $\rn$.
For a Borel measurable function $V:\rn\to\real$, we can define the
functional $A_t$ of $X$ by
%
\begin{equation}
\label{A10} A_t:=\int_0^t
V(X_s)ds, \quad t>0.\vspace{15pt}\vadjust{\eject}
\end{equation}
Suppose that
$\lim_{t\to0} \sup_x \Ee^x |A_t|=0$.
Then, by the Khasminski lemma 
there exist constants $C,b>0$, such that
%
\begin{equation}
\label{gaug1} \sup_x \Ee^x e^{|A_t|}\leq
C e^{bt};
\end{equation}
see, for example, \cite[Lemma~3]{CR88} or \cite[Lemma~3.3.7]{CZ95}.
Estimate \eqref{gaug1} allows us to define the operator
%
\begin{equation}
\label{ta1} T_t^A f(x):= \Ee^x \bigl[
e^{A_t } f(X_t) \bigr],\quad x\in\rn,\ t>0,
\end{equation}
where the function $f$ is bounded and Borel measurable. The family of
operators $(T_t^A)_{t\geq0}$ forms a semigroup, called the \emph
{Feynman--Kac semigroup}.

Feynman--Kac semigroup is well studied in the case of a Brownian
motion (see \cite{Sz98,CD00,CZ95,BM90}); in
particular, in \cite{BM90} more general functionals are treated.
The case of a general Markov process is much more complicated; see,
however, \cite[Chap.~3.3.2]{CZ95} and \cite{CD00}. The essential
condition on the process, stated in the papers cited, is that the
Markov process $X$ is symmetric and possesses a transition probability
density $p_t(x,y)$.

In this paper, we construct and investigate the Feynman--Kac
semigroups for a wider class of Markov processes. First, we construct
the Feynman--Kac semigroup for a (non-symmetric) Markov process,
admitting a transition density. We also treat a more general class of
functionals $A_t$, that is, in our setting the functional $A_t$ is not
necessarily of the form \eqref{A10}, but is constructed by means of
some measure $\varpi$, which is in the \emph{Kato class} with respect
to the transition probability density of $X$ (cf. \eqref{K1}). The approach used in
\cite
{CB11} allows us to show the existence of the kernel $p_t^A(x,y)$ of
the semigroup $(T_t^A)_{t\geq0}$ and to give its representation. The
method from \cite{CB11} relies on the construction of the \emph{Markov
bridge density}, which in turn employs the regularity properties of the
transition probability density of the initial process $X$ rather than its symmetry.

In such a way, this prepares the base for the main result of the paper,
which is devoted to the investigation of the Feynman--Kac semigroup for
the particular class of processes constructed in \cite{KK13a}. In
\cite{KK12a,KK15}, we develop the approach that allows us to relate
to a pseudo-differential operator of certain type a Markov process
possessing a transition probability density $p_t(x,y)$ and construct
for this density two-sided estimates. In particular, such estimates
provide an easily checkable condition when a measure $\varpi$ belongs
to the Kato class with respect to $p_t(x,y)$. This allows us to
describe the respective continuous additive functional $A_t$ and to
show \eqref{gaug1}. Starting with the class of processes investigated
in \cite{KK13a}, we construct (see Theorem~\ref{T-FK1}) the upper and
lower estimates for the Feynman--Kac density $p^A_t(x,y)$. In
particular, we show that the structure of such estimates is
``inherited'' from the structure of the estimates on $p_t(x,y)$.
In some cases when the upper bound on $p_t(x,y)$ can be written in a
rather compact way, we can describe explicitly the Kato class of
measures. For example, this is the case if  $p_t(x,y)$ is comparable for small $t$ with the density of a symmetric stable
process; see also \cite[Cor.~12]{BJ07} for refined results. In
Proposition~\ref{pr1} we show that if the initial transition probability density
possesses an upper bound of a rather simple (polynomial) form, this
form is inherited by the Feynman--Kac density $p_t^A(x,y)$.

Up to the author's knowledge, in general, the results on two-sided
estimates of $p^A_t(x,y)$ are yet unavailable.
For $X$ being an $\alpha$-stable-like process, the estimates of the
kernel $p^A_t(x,y)$ are obtained in \cite{So06}; see also \cite{CKS13}
and the references therein for more recent results in this direction,
including two-sided estimates on $p^A_t(x,y)$ in the case when the
functional $A$ is not necessarily continuous. The approach used in
\cite
{So06,CKS13} to construct the Feynman--Kac semigroup is based
on the Dirichlet form technique. See also \cite{BS13} for yet another
approach to investigate Feynman--Kac semigroups.

The paper is organized as follows. In Section~\ref{main}, we give the
basic notions and introduce the main results. Proofs are given in
Sections~\ref{Proo1} and \ref{Proo2}. In Section~\ref{exam}, we
illustrate our results with examples.

\subsection*{Notation} For functions $f$, $g$,  by $f\asymp g$
we mean that there exist some constants $c_1, c_2>0$ such that $c_1 f(x)
\leq g(x) \leq c_2 f(x)$ for all $x\in\rn$. By $x\cdot y$ and
$\|x\|$ we denote, respectively, the scalar product and the norm in
$\rn$, and $\Sss^n$ denotes the unit sphere in $\rn$. By $B_b(\rn)$ we
denote the family of bounded Borel functions on~$\rn$. By $C^k_\infty
(\rn)$ we denote the space of $k$-times differentiable functions, with
derivatives vanishing at infinity. By $c_i$,
$c$ and  $C$ we denote arbitrary positive constants. The symbols
$*$, $\sq$, and $\lozenge$
denote, respectively, the convolutions
\begin{align*}
\label{dia} (f* g) (x,y)&:= \int_\rn f(x-z)g(z-y)dz, \quad\\
(f\,\sq\, g ) (x,y)&:= \int_\rn f(x-z) g(z-y)\varpi(dz),
\end{align*}
and
\begin{equation*}
(f\,\lozenge\, g )_t(x,y):= \int_0^t
\int_\rn f_{t-s}(x,z) g_s(z,y)
\varpi(dz) ds,
\end{equation*}
where $\varpi$ is a (signed) measure.

\section{Settings and the main results}\label{main}

Let $X$ be a Markov process with the state space $\rn$. We call $X$ a
\emph{Feller process} if the corresponding operator
%
\begin{equation}
\label{Tt} T_t f(x):= \Ee^x f(X_t)
\end{equation}
maps the space $C_\infty(\rn)$ of continuous functions vanishing at
infinity into itself.
Assume that $X$ possesses a transition probability density $p_t(x,y)$ which satisfies
the following assumption.

\begin{itemize}

\item[\textbf{P1.}] For fixed $x\in\rn$, the mapping $y\mapsto
p_s(x,y)$ is continuous for all $s\in(0,t]$, and the mapping
$s\mapsto p_s(x,y)$ is continuous for all $x,y\in\rn$.
\end{itemize}

Recall some notions on the Kato class of measures and related
continuous additive functionals.

We say that a functional $\varphi_t$ of a Markov process $X_t$ is a
$W$-functional (see \cite[\S6.11]{Dy65}) if\vadjust{\goodbreak} $\varphi_t$ is a positive
continuous additive functional, almost surely homogeneous, and such
that $ \sup_x\Ee^x \varphi_t<\infty$. By additivity we mean that
$\varphi_t$ satisfies the following equality:
%
\begin{equation}
\label{ad1} \varphi_{t+s} = \varphi_t +
\varphi_s\circ\theta_t,
\end{equation}
where $\theta_t $ is the shift operator, that is, $X_s \circ\theta_t
=X_{t+s}$. The function $v_t(x):= \Ee^x \varphi_t$ is called the
characteristic of $\varphi_t$ and determines $\varphi_t$ in the unique
way; see \cite[Thm.~6.3]{Dy65}.

A positive Borel measure $\varpi$ is said to belong to the Kato class
$S_K$ with respect to $p_t(x,y)$ if
%
\begin{equation}
\lim_{t\to0} \sup_{x\in\rn} \int
_0^t \int_\rn
p_s(x,y)\varpi (dy)ds=0. \label{K1}
\end{equation}
By \cite[Thm.~6.6]{Dy65}, the condition $\varpi\in S_K$ implies that
the function
%
\begin{equation}
\chi_t(x):= \int_0^t \int
_{\rn} p_s(x,y)\varpi(dy)ds \label{chit}
\end{equation}
for which the mapping $x\mapsto\chi_t(x)$ is measurable for all
$t\geq
0$, is the characteristic of some $W$-functional $\varphi_t$.

Let $\varpi=\varpi^+-\varpi^-$ be a signed measure such that $\varpi
^\pm
\in S_K$ with respect to $p_t(x,y)$. Then
%
\begin{equation}
\label{chipm} \chi_t^\pm:= \int_0^t
\int_\rn p_s(x,y)\varpi^\pm(dy)ds
\end{equation}
are the characteristics of some $W$-functionals $A_t^\pm$,
respectively, that is, there exist $A_t^\pm$ such that $\chi_t^\pm(x)=
\Ee^x A_t^\pm$. Since for such functionals we have
\[
\lim_{t\to0}\sup_x \Ee^x
A_t^\pm=0,
\]
then estimate \eqref{gaug1} holds true, and thus the Feynman--Kac semigroup
$(T_t^A)_{t\geq0}$ for $A_t:=A_t^+- A_t^-$ is correctly defined.

To show that the semigroup $(T_t^A)_{t\geq0}$ can be written as
\[
T_t^A f(x)= \int_\rn f(y)
p^A_t(x,y)dy, \quad f\in B_b\bigl(\rn\bigr),
\]
and to find the representation of the density $p^A_t(x,y)$ in terms of
the probability density of the initial process, recall some notions on
Markov bridge measures.

Denote by $(\Ff_t)_{t\geq0}$ the admissible filtration related to $X$.
A \emph{Markov bridge} $X_t^{x,y}$ of $X_t$ is a Markov processes
conditioned by $X_0=x$ and $X_t=y$.
In the proof of \cite[Thm.~1]{CB11}, it is shown that under \textrm{P1}
there exists the corresponding Markov bridge measure $\Pp
_{x,y}^t$ on $\Ff_{t-}$ for $(t,x,y)$ such that $p_t(x,y)>0$.
We denote by $\Ee_{x,y}^t$ the expectation with respect to $\Pp_{x,y}^t$.

The next proposition is essentially contained in \cite[Thm.~1]{CB11},
but we reformulate the result in the way convenient for our purposes.

\begin{proposition}\label{p-kato1}
Let $X$ be a Feller process, admitting the transition probability density
$p_t(x,y)$, for which assumption \emph{P1} holds. Let $\varpi=\varpi
^+-\varpi^-$ be a signed Borel measure, $\varpi^\pm\in S_K$, and
$A_t=A_t^+-A_t^-$, where $A^\pm$ are continuous additive functionals
with characteristics \eqref{chipm}, respectively. Then
\[
T^A_tf(x)= \int_{\{y:\, p_t(x,y)>0\}} f(y)
p_t^A(x,y)dy \quad\text{for any $f\in B_b\bigl(
\rn\bigr)$},
\]
where
%
\begin{equation}
\label{pv10} p^A_t(x,y)=p_t(x,y)
\Ee_{x,y}^t e^{A_t}, \quad\text{$x,y \in\rn$,
$t>0$.}
\end{equation}
\end{proposition}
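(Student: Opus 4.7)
The plan is to realise $T_t^A f(x)$ as an iterated expectation, conditioning on the endpoint $X_t=y$ and invoking the Markov bridge decomposition established in \cite{CB11}. Since the statement is essentially a restatement of \cite[Thm.~1]{CB11}, the task is to verify that the hypotheses of that disintegration result apply to the functional $Y=e^{A_t}$ with $A_t=A_t^+-A_t^-$.

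First I would record that the bridge measures $\Pp_{x,y}^t$ exist on $\Ff_{t-}$ for all $(t,x,y)$ with $p_t(x,y)>0$, thanks to assumption \textrm{P1} and \cite[Thm.~1]{CB11}. The disintegration formula I want to use is the identity
\[
\Ee^x\bigl[Y\cdot g(X_t)\bigr]=\int_{\{y:\,p_t(x,y)>0\}}g(y)\Ee_{x,y}^t[Y]\,p_t(x,y)\,dy,
\]
valid for every $\Ff_{t-}$-measurable $Y$ with $\Ee^x|Y\,g(X_t)|<\infty$ and every $g\in B_b(\rn)$; this is the content of the bridge construction in \cite{CB11}.

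Next, I would check that $e^{A_t}$ is $\Ff_{t-}$-measurable and integrable. Measurability follows from the fact that $A^\pm$ are continuous additive functionals: the map $s\mapsto A_s^\pm$ is continuous, so $A_t^\pm=\lim_{s\uparrow t}A_s^\pm$ a.s., and each $A_s^\pm$ is $\Ff_s$-measurable for $s<t$; hence $A_t\in\Ff_{t-}$. Integrability is immediate from Khasminski's bound \eqref{gaug1} applied to $|A_t|\leq A_t^++A_t^-$, which gives $\sup_x\Ee^x e^{|A_t|}\leq C e^{bt}<\infty$ so that the product $e^{A_t}f(X_t)$ is uniformly integrable for bounded $f$. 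Taking $Y=e^{A_t}$ and $g=f$ in the disintegration formula, and setting
\[
p_t^A(x,y):=p_t(x,y)\,\Ee_{x,y}^t e^{A_t},
\]
produces exactly the asserted representation.

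The part requiring the most care will be the $\Ff_{t-}$-measurability: the bridge measures are defined only on the pre-$t$ $\sigma$-algebra, while $A_t$ nominally involves the terminal time. The continuity of $A^\pm$ resolves this, but one must also ensure that the Kato hypothesis $\varpi^\pm\in S_K$ — which by \cite[Thms.~6.3 and~6.6]{Dy65} guarantees the existence of $A^\pm$ as $W$-functionals with characteristics \eqref{chipm} — yields additive functionals with continuous sample paths. This is implicit in the cited Dynkin theorems and ensures that the approximation $A_t^\pm=\lim_{s\uparrow t}A_s^\pm$ is genuinely pathwise, so that no modification of $A_t$ at the endpoint is needed and the identity \eqref{pv10} holds for all $(x,y)$ with $p_t(x,y)>0$.
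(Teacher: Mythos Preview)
Your proposal is correct and follows the same approach as the paper: the paper does not give a detailed proof of Proposition~\ref{p-kato1}, stating only that it is essentially contained in \cite[Thm.~1]{CB11}, and your argument correctly identifies the verifications needed to apply that result (namely $\Ff_{t-}$-measurability of $e^{A_t}$ via continuity of the $W$-functionals $A^\pm$, and integrability via the Khasminski bound~\eqref{gaug1}).
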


\begin{remark}
When $X$ is a Brownian motion, the statement of Proposition~\ref
{p-kato1} is known, see \cite{Sz98} and also \cite{BM90}. The
construction from \cite{BM90,Sz98} can be extended to the case
of a \emph{symmetric} Markov process, see \cite{CD00}. On the contrary,
the construction presented in \cite{CB11} relies on \textrm{P1} and
does not require the symmetry of the initial process.
\end{remark}

Proposition~\ref{p-kato1} implicitly gives the representation of the
function $p^A_t(x,y)$. However, when one wants to get quantitative
information about $p_t^A(x,y)$, like the upper bound on $p_t^A(x,y)$,
estimation of the expectation $\Ee_{x,y}^t e^{A_t}$ in \eqref{pv10}
appears to be non-trivial. Instead, for some class of Feller processes,
we can use another approach, which enables us to get explicitly an
upper estimate of $p_t^A(x,y)$. Namely, in \cite{KK13a} we formulated
the assumptions under which 
one can construct a Feller process possessing the transition probability density
$p_t(x,y)$ satisfying assumption \textrm{P1} and admitting upper and
lower bounds of certain form. In order to make the presentation
self-contained, we quote this result below.

Let
%
\begin{equation}
\label{lxd0} \mathcal{L}f(x):=a(x) \cdot\nabla f(x)+ \int_\rn
\bigl(f(x+u)-f(x)-u \cdot\nabla f(x) \I_{\{\|u\|\leq1\}} \bigr)m(x,u)\mu(du),
\end{equation}
where $f\in C_\infty^2(\rn)$, and $\mu$ is a L\'evy measure, that is,
a Borel measure such that
\[
\int_\rn\bigl(\|u\|^2\wedge1\bigr)\mu(du)<
\infty.
\]
Assume that $\mu$ satisfies the following assumption.

\begin{itemize}
\item[\textbf{A1.}] There exists $\beta>1$ such that
\[
\sup_{\ell\in\Sss^n}q^U(r \ell)\leq\beta\inf
_{\ell\in\Sss^n} q^L(r \ell)\quad\text{ for all $r>0$ large
enough, }
\]
\end{itemize}
where
%
\begin{equation}
q^U(\xi):=\int_\rn\bigl[(\xi\cdot
u)^2\wedge1\bigr]\mu(du),\qquad q^L(\xi ):=\int
_{|u\cdot\xi|\leq1} (\xi\cdot u)^2\mu(du).\label{psipm}
\end{equation}
%

Assume that the functions $a(x)$ and $m(x,u)$ in \eqref{lxd0} satisfy
the  assumptions \textrm{A2}--\textrm{A4} given below.

\begin{itemize}

\item[\textbf{A2.}] The functions $m(x,u)$ and $a(x)$ are measurable,
and satisfy with some constants $b_1,\, b_2,\, b_3 >0$, the inequalities
\[
b_1\leq m(x,u)\leq b_2,\qquad\big|a(x)\big|\leq
b_3, \quad x,u\in\rn.
\]

\item[\textbf{A3.}] There exist constants $\gamma\in(0,1]$ and
$b_4>0$ such that
\begin{align}
\label{M2b} \big|m(x,u)-m(y,u)\big| + \big\|a(x)-a(y)\big\| \leq b_4\bigl( \|x-y
\|^\gamma\wedge 1\bigr),\quad u,\, x,\,y \in\rn.
\end{align}

\item[\textbf{A4.}] In the case $\beta>2$, we assume that $a(x)=0$ and
the kernel $m(x,u)\mu(du)$ is symmetric with respect to $u$ for all
$x\in\rn$.
\end{itemize}

Denote by $f_{\mathrm{low}}$ and $f_{\mathrm{up}}$ the functions of the form
%
\begin{equation}
\label{flu1} f_{\mathrm{low}}(x):= a_1 \bigl(1-a_2\|x
\|\bigr)_+, \qquad f_{\mathrm{up}}(x):=a_3 e^{-a_4 \|x\|}, \quad x\in
\rn,
\end{equation}
where $a_i>0$, $1\leq i\leq4$, are some constants.

Finally, define $q^* (r) :=\sup_{\ell\in\Sss^n}q^U(r \ell)$, $ r>0$.
It was shown in \cite{K13} (see also~\cite{KK12a}) that condition
\textrm{A1} implies that
\[
q^*(r)\geq r^{2/\beta}, \quad r\geq1.
\]
Note also that the continuity of $q^U$ in $\xi$ implies the continuity
of $q^*$ in $r$. Therefore, we can define its generalized inverse
%
\begin{equation}
\label{rho1} \rho_t:= \inf\bigl\{ r: q^*(r) =1/t\bigr\}, \quad
t\in(0,1].
\end{equation}

\begin{thm}[\cite{KK13a}]\label{t-main}
Under assumptions \emph{A1}--\emph{A4}, the operator $(\mathcal{L},
C_\infty^2(\rn)$ extends to the generator of a Feller process, admitting
a transition probability  density $p_t(x,y)$. This density is continuous in
$(t,x,y)\in(0,\infty)\times\rn\times\rn$, and
there exist constants $a_i>0$, $1\leq i\leq4$, and a family of
sub-probability measures $\{ Q_t, t\geq0\}$ such that
%
\begin{equation}
\label{upper-es} \rho_t^n f_{\mathrm{low}}\bigl((x-y)
\rho_t\bigr) \leq p_t(x,y)\leq\rho_t^n
\bigl(f_{\mathrm{up}}(\rho_t \cdot)* Q_t \bigr) (x-y),
\quad t\in(0,1], \ x,y\in \rn,
\end{equation}
where $f_{\mathrm{low}}$ and $f_{\mathrm{up}}$ are functions of the form \eqref{flu1}
with constants $a_i$, and $\rho_t$ is defined in \eqref{rho1}.
\end{thm}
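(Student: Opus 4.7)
The plan is to invoke Levi's parametrix method, adapted to the non-local operator $\mathcal{L}$, and to read off the two-sided bounds from the construction itself. The construction hinges on a zero-order approximation obtained by freezing the coefficients of $\mathcal{L}$ at the endpoint $y$.

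For each fixed $y\in\rn$, introduce the constant-coefficient operator
\[
\mathcal{L}^y f(x) := a(y)\cdot\nabla f(x)+\int_\rn\bigl(f(x+u)-f(x)-u\cdot\nabla f(x)\I_{\{\|u\|\leq1\}}\bigr)m(y,u)\mu(du),
\]
which generates a L\'evy process $Z^y$. Using A2 (boundedness) and A1 (comparability of $q^U$ and $q^L$ up to the constant $\beta$), together with the definition of $\rho_t$, one shows via Fourier inversion that $Z^y$ admits a continuous transition density $p_t^y(\cdot)$ satisfying, uniformly in $y$, two-sided bounds of the type displayed in \eqref{upper-es}; this step reuses the ``free'' estimates obtained in \cite{K13,KK12a}. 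Set $p_t^0(x,y):=p_t^y(x-y)$ and form the defect $r_t(x,y):=(\mathcal{L}_x-\mathcal{L}^y)p_t^0(\cdot,y)(x)$, where $\mathcal{L}_x$ acts on the first argument. Assumption A3 then delivers the crucial H\"older estimate
\[
\bigl|r_t(x,y)\bigr|\leq C\bigl(\|x-y\|^\gamma\wedge 1\bigr)R_t(x,y),
\]
where $R_t(x,y)$ is a $\rho_t$-scaled envelope of compound-Poisson type that is integrable in $t$ near $0$.

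Next, solve
\[
\Phi_t(x,y) = r_t(x,y)+\int_0^t\!\int_\rn r_{t-s}(x,z)\Phi_s(z,y)\,dz\,ds
\]
by Picard iteration; the H\"older gain $\|x-y\|^\gamma$ in each iterate secures convergence in a suitable weighted sup-norm. Then define
\[
p_t(x,y):=p_t^0(x,y)+\int_0^t\!\int_{\rn}p_{t-s}^0(x,z)\Phi_s(z,y)\,dz\,ds
\]
and verify, by standard if laborious arguments, that $p_t(x,y)$ is nonnegative, jointly continuous in $(t,x,y)$, a stochastic kernel, and that the associated semigroup maps $C_\infty(\rn)$ into itself and solves the Kolmogorov equation on $C_\infty^2(\rn)$; this promotes $(\mathcal{L},C_\infty^2(\rn))$ to the generator of a Feller process. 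The upper bound in \eqref{upper-es} transfers from $p_t^0$ to $p_t$ because convolving with $\Phi$ preserves the shape $\rho_t^n(f_{\mathrm{up}}(\rho_t\cdot)*Q_t)$, while the lower bound follows because $p_t^0(x,y)$ alone already exceeds $\rho_t^n f_{\mathrm{low}}(\rho_t(x-y))$, and on the (small) support of $f_{\mathrm{low}}(\rho_t(x-y))$ the correction term is of smaller order than the main term.

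The hard part is the free-estimate step: producing two-sided L\'evy bounds uniformly in the frozen parameter $y$, with the correct $\rho_t$-scaling dictated by the anisotropic symbol of $\mathcal{L}^y$. The upper bound in particular requires a delicate symbol/compound-Poisson decomposition, from which the sub-probability measure $Q_t$ appearing in \eqref{upper-es} naturally arises. Assumption A4 (no drift, symmetric kernel when $\beta>2$) is forced precisely at this point: when $\beta>2$, the drift together with the asymmetry of $m(y,u)\mu(du)$ would dominate the diffusive behaviour at scale $\rho_t$ and destroy the $f_{\mathrm{up}}$ upper envelope.
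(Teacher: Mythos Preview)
The paper does not contain a proof of this theorem: it is quoted verbatim from \cite{KK13a} (see the sentence ``In order to make the presentation self-contained, we quote this result below'' immediately preceding the statement), and is used as input for the main result, Theorem~\ref{T-FK1}. So there is no ``paper's own proof'' to compare against.

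That said, your sketch is faithful to the approach of the cited source \cite{KK13a} (see also \cite{KK15}): freeze coefficients at the endpoint to obtain a L\'evy zero-order approximation $p_t^0$, use the uniform free estimates from \cite{K13,KK12a} under A1--A2, exploit the H\"older continuity A3 to control the defect $r_t=(\mathcal{L}-\mathcal{L}^y)p_t^0$, sum the Volterra/Picard series for $\Phi$, and read off the compound-kernel bounds with the sub-probability family $Q_t$ emerging from the compound-Poisson decomposition of the free density. Your remark on A4 is also correct in spirit. As a high-level outline this is accurate; of course each of the ``standard if laborious'' steps (positivity, Chapman--Kolmogorov, Feller property, convergence of the parametrix series with the right weighted norms) is substantial and constitutes the bulk of \cite{KK13a}.
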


The constructed process is a \emph{L\'evy type} process. In the
``constant coefficient case,'' that is, where $a(x)\equiv \mathrm{const}$ and
$m(x,u)=\mathrm{const}$, \eqref{lxd0} is just the representation of the
generator of a L\'evy process; in other words, a L\'evy type process is
the process with ``locally independent increments.'' It is known
(cf.~the Courr\`ege--Waldenfels theorem, see \cite[Thm.~4.5.21]{Ja01})
that if the class $C_c^\infty(\rn)$ of infinitely differentiable
compactly supported functions belongs to the domain $D(A)$ of the
generator $A$ of a Feller process, then on this set~$C_c^\infty(\rn)$
the operator $A$ coincides with $\mathcal{L}+{} $``Gaussian
component.'' Thus, the class of processes satisfying the conditions of
Theorem~\ref{t-main} is rather wide.

Let us show that, under the conditions of Theorem~\ref{t-main}, we have
\[
p_t(x,y)>0\quad\text{for all $ t>0$,\,\, $x,y\in\rn$.}
\]
We find the minimal $N$ such that the distance from $x$ to $y$ can be
covered by $N$ balls of the radius smaller than $(2a_2\rho_{t/N})^{-1}$
(where $a_2>0$ is the constant appearing in $f_{\mathrm{low}}$ in \eqref
{upper-es}), that is, the minimal $N$ for which
%
\begin{equation}
\label{balls} \frac{\|x-y\|}{N}\leq\frac{1}{a_2\rho_{t/N}}.
\end{equation}
Observe that $q^*(r)\leq c_1 r^2$, $r\geq1$, implying $c_2
t^{-1/2}\leq\rho_t$ for all $t$ small enough. Hence, \eqref{balls}
holds with $N \geq\frac{(a_2 c_2 \|x-y\|)^2}{t}$.
Therefore, putting $y_0=x$ and $y_N=y$, we get
\begin{align*}
p_t(x,y)&= \int_\rn\dots\int
_\rn \Biggl(\prod_{i=1}^N
p_{t/N}(y_{i-1},y_i) \Biggr)dy_1
\dots dy_N
\\
&\geq\int_{B(y_0,(2a_2\rho_{t/N})^{-1})} \dots\int_{B(y_{N-1},
(2a_2\rho_{t/N})^{-1})}\prod
_{i=1}^N p_{t/N}(y_{i-1},y_i)
dy_i
\\
&\geq c_0 \rho_{t/N}^{Nn},
\end{align*}
where in the last line we used that
\[
p_{t/N} (y_{i-1},y_i)\geq2^{-1}a_1
\rho_{t/N}^n \quad \text{for all $y_i\in B
\bigl(y_{i-1},(2a_2\rho_{t/N})^{-1}
\bigr)$}.
\]
Thus, the transition probability density $p_t(x,y)$ is strictly positive.

Finally, for a signed Borel measure $\varpi$, define
%
\begin{equation}
\label{h} h(r):=\sup_x |\varpi|\bigl\{ y: \, \|x-y\|\leq r\bigr\},
\end{equation}
where $|\varpi|:= \varpi^++\varpi^-$ is the total variation of
$\varpi$. Denote by $\hat{h}$ the Laplace transform of $h$.

\medskip

The following theorem is the main result of the paper. Let $t_0\in
(0,1]$ be small enough.

\begin{thm}\label{T-FK1}
Let $X$ be the Feller process constructed in Theorem~\emph{\ref{t-main}}.
Take a~signed Borel measure $\varpi$ such that its volume function
\eqref{h} satisfies
%
\begin{equation}
\label{H1} \int_0^t\rho_s^{n+1}
\hat{h}(\rho_s)ds\leq C t^\zeta, \quad t\in[0,1],
\end{equation}
with some constants $C,\zeta>0$, where $\rho_t$ is given by \eqref
{rho1}. Then

\begin{itemize}

\item[\rm a)] There exists a continuous functional $A_t$ such that
\[
\Ee^x A_t=\int_0^t
\int_\rn p_s(x,y)\varpi(dy)ds;
\]

\item[\rm b)]
The semigroup $(T^A_t)_{t\geq0}$ is well defined, and its kernel
possesses a density $p^A_t(x,y)$ with respect to the Lebesgue measure
on $\rn$;

\item[\rm c)] There exist constants $a_i>0$, $1\leq i\leq4$, and a
family of sub-probability measures $\{\mathcal{R}_t, \, t\geq0\}$ such
that for
$t\in(0,t_0]$ and $x,y\in\rn$,
%
\begin{equation}
\label{pa10} \rho_t^n f_{\mathrm{low}}\bigl((x-y)
\rho_t\bigr)\leq p^A_t(x,y)\leq
\rho_t^n \bigl(f_{\mathrm{up}}(\rho_t \cdot)
* \mathcal{R}_t \bigr) (y-x);
\end{equation}

here $f_{\mathrm{low}}$ and $f_{\mathrm{up}}$ are the function of the form \eqref{flu1}
with some constants $a_i$, $1\leq i\leq4$.
\end{itemize}
\end{thm}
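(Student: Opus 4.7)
The plan is to first verify that the measures $\varpi^\pm$ lie in the Kato class $S_K$ associated with $p_t(x,y)$, so that Proposition~\ref{p-kato1} applies and yields the representation $p_t^A(x,y)=p_t(x,y)\Ee_{x,y}^t e^{A_t}$, and then to derive the two-sided bounds~(\ref{pa10}) using this representation together with~(\ref{upper-es}) for the underlying density.

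For part~(a), I substitute the upper bound from Theorem~\ref{t-main} into the Kato integral~(\ref{K1}):
$$\int_0^t\!\int_\rn p_s(x,y)|\varpi|(dy)\,ds \le \int_0^t \rho_s^n \int_\rn \bigl(f_{\mathrm{up}}(\rho_s\cdot)*Q_s\bigr)(x-y)\,|\varpi|(dy)\,ds.$$
Since $f_{\mathrm{up}}(z)=a_3 e^{-a_4\|z\|}$, an integration by parts in polar coordinates identifies $\int_\rn e^{-a_4\rho_s\|z-w\|}|\varpi|(dz)$ with a constant multiple of $\rho_s\hat{h}(\rho_s)$, uniformly in $w\in\rn$, and the convolution with the sub-probability $Q_s$ is absorbed trivially. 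Assumption~(\ref{H1}) then yields the Kato-class bound $C t^\zeta$, so $\varpi^\pm\in S_K$ and the characteristics~(\ref{chipm}) define continuous additive functionals $A_t^\pm$ with $\Ee^x A_t=\int_0^t\!\int_\rn p_s(x,y)\varpi(dy)\,ds$. Part~(b) follows at once from Proposition~\ref{p-kato1}.

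For the upper estimate in part~(c), I would iterate the Duhamel identity
$$p_t^A(x,y)=p_t(x,y)+\int_0^t\!\int_\rn p_{t-s}(x,z)\,p_s^A(z,y)\,\varpi(dz)\,ds,$$
obtaining $p_t^A=\sum_{k\ge 0}p_t^{(k)}$ with $p_t^{(0)}=p_t$ and $p_t^{(k+1)}=(p\lozenge p^{(k)})_t$. Inductively, each $p_t^{(k)}$ admits a bound of the shape $\rho_t^n\bigl(f_{\mathrm{up}}(\rho_t\cdot)*\mathcal{R}_t^{(k)}\bigr)$, where $\mathcal{R}_t^{(k)}$ is a (signed) measure whose total variation is controlled by the $k$-th iterate of the Kato quantity computed in~(a). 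Using~(\ref{H1}) the series $\sum_k \|\mathcal{R}_t^{(k)}\|$ converges for $t\le t_0$ small enough, and normalising the sum produces the required sub-probability family $\mathcal{R}_t$. The main technical obstacle lies precisely in the inductive step: one must show that the $\lozenge$-convolution against $\varpi$ preserves the shape $\rho_t^n\bigl(f_{\mathrm{up}}(\rho_t\cdot)*\,\cdot\,\bigr)$, which requires sub-additivity-type estimates linking $\rho_s$, $\rho_{t-s}$ and $\rho_t$ (stemming from the fact that $\rho_t$ is the generalised inverse of the regularly-behaved function $q^*$) and exploiting the exponential decay of $f_{\mathrm{up}}$ to absorb the reshaping into the convolution factor.

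For the lower estimate I apply Jensen's inequality to~(\ref{pv10}):
$$p_t^A(x,y)\ge p_t(x,y)\exp\bigl(\Ee_{x,y}^t A_t\bigr)\ge p_t(x,y)\exp\bigl(-\Ee_{x,y}^t (A_t^++A_t^-)\bigr).$$
The standard bridge formula gives $\Ee_{x,y}^t(A_t^++A_t^-)=p_t(x,y)^{-1}\int_0^t\!\int_\rn p_{t-s}(x,z)p_s(z,y)|\varpi|(dz)\,ds$. In the diagonal regime $\|x-y\|\rho_t\le 1/a_2$, outside of which~(\ref{pa10}) is vacuous because $f_{\mathrm{low}}\equiv 0$ there, the denominator satisfies $p_t(x,y)\asymp\rho_t^n$, while the numerator is dominated using the same Kato-type computation as in~(a) applied to both factors, yielding $\Ee_{x,y}^t(A_t^++A_t^-)\le Ct^\zeta$. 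Hence, for $t\in(0,t_0]$ with $t_0$ small enough, the exponential factor exceeds a positive constant, and combining with the lower bound from Theorem~\ref{t-main} gives~(\ref{pa10}) after adjusting the constants $a_i$.
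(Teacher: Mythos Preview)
Your treatment of parts~(a) and~(b) matches the paper's, and your lower bound via Jensen's inequality and the bridge formula is correct and in fact cleaner than what the paper does: there the lower estimate is obtained from the series representation, bounding $\bigl|\sum_{k\ge 2}p_t^{\lozenge k}(x,y)\bigr|\le C_0F(t)\rho_t^n$ and subtracting this from the lower bound on $p_t$. Your route avoids the series altogether for the lower bound and only uses the Kato estimate from part~(a) together with the diagonal comparability $p_t(x,y)\asymp\rho_t^n$; this is a genuine simplification.

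The upper bound, however, has a real gap. Your inductive claim that each $p_t^{(k)}$ is bounded by $\rho_t^n\bigl(f_{\mathrm{up}}(\rho_t\cdot)*\mathcal{R}_t^{(k)}\bigr)$ \emph{with the same} $f_{\mathrm{up}}$ cannot hold. The generic convolution estimate (Lemma~\ref{con-g} in the paper) is
\[
(g_{t-s}\,\sq\, g_s)(x)\le C\bigl[\phi_{(1-\theta)b}(t-s)+\phi_{(1-\theta)b}(s)\bigr]\,g_{t,\theta}(x),\qquad \theta<1,
\]
so each iteration forces a strict loss in the exponential rate: one must pass from $e^{-b\theta_{k-1}\rho_t\|x\|}$ to $e^{-b\theta_k\rho_t\|x\|}$ with $\theta_k<\theta_{k-1}$, and the constant picks up a factor $(\theta_{k-1}-\theta_k)^{-n}$. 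The paper computes explicitly (Section~\ref{FK-32}) that with, say, $\theta_k=\tfrac12+\tfrac1{2k}$ the accumulated constants behave like $(2^k k!(k-1)!)^n/\Gamma(k/2)$, which explodes. So the claim that $\|\mathcal{R}_t^{(k)}\|$ is controlled by the $k$-th power of the Kato quantity is false for the naive iteration. The paper's fix is a two-stage procedure: run the decreasing-$\theta_k$ scheme only for $k\le k_0:=[n/(\alpha\zeta)]$, at which point \eqref{H1} gives $\rho_t^n F^{k_0}(t)\le c$; then drop the prefactor $\rho_t^n$ from the bound and iterate with a \emph{fixed} exponent $\theta_{k_0}$, gaining one factor $CF(t)$ per step with $CF(t)<1/2$ for $t\le t_0$. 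Without this switch the series does not converge.

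A secondary point: once you have the convergent series you still need to identify it with $p_t^A$. Iterating the Duhamel identity leaves a remainder involving $p_t^A$ itself, so you cannot conclude directly; the paper closes this by proving uniqueness of bounded solutions to~\eqref{duh} via Gronwall--Bellman.
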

\begin{remark}\label{bounds}
In general, $a_i$ in estimate \eqref{pa10} are  \emph{some}
constants, that may not coincide with those in estimate \eqref{upper-es}.
In order to simplify the notation, we assume that in Theorem~\ref
{t-main}, $a_1=a_3=1$, $a_2=a$, and $a_4=b$.
\end{remark}
Assumption \eqref{H1} can be relaxed, provided that more information
about the initial transition probability density is available.
Put
%
\begin{equation}
\label{gtx} \mathfrak{g}_t(x):= \frac{1}{t^{\frac{n}{\alpha}}( 1+ \|x\|
/t^{1/\alpha
})^{d+\alpha}}, \quad t>0,
 \ x\in\rn.
\end{equation}
Note that for $d=n$, this function is equivalent to the transition
probability density of a symmetric $\alpha$-stable process in $\rn$
(that is, the process whose characteristic function is $e^{-t \|\xi\|
^\alpha}$).
Denote by $\mathcal{K}_{n,\alpha}$ the class of Borel signed measures
such that
%
\begin{equation}
\label{kato10} \lim_{t\to0} \sup_x \int
_0^t \frac{|\varpi|\{ y: \, \|x-y\|\leq
s\}
}{s^{n+1-\alpha}}ds=0.
\end{equation}
The following lemma shows that for $d>n-\alpha$ the Kato class of
measures with respect to $\mathfrak{g}_t(x-y)$ coincides with
$\mathcal
{K}_{n,\alpha}$.
The proof uses the idea from~\cite{BJ07}, and will be given in Appendix \ref{appendix}.
\begin{lemma}\label{ka2}
A finite Borel signed measure $\varpi$ belongs to $S_K$ with respect to
$\mathfrak{g}_t(x-y)$, given by \eqref{gtx} with $d>n-\alpha$, if and
only if $|\varpi|\in\mathcal{K}_{n,\alpha}$.
\end{lemma}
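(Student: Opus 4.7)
The plan is to start by computing the Kato integral $\Phi(t,x) := \int_0^t\int_{\rn}\mathfrak{g}_s(x-y)|\varpi|(dy)\,ds$ in closed form in terms of the volume function $m(x,r):=|\varpi|\{y:\|x-y\|\leq r\}$. Writing the inner integral via its distribution function, or equivalently using the substitution $v=\|x-y\|/s^{1/\alpha}$ followed by Fubini, I would obtain
\begin{equation*}
\Phi(t,x)=\alpha(d+\alpha)\int_0^\infty\frac{m(x,r)}{r^{n+1-\alpha}}\,F\bigl(r/t^{1/\alpha}\bigr)\,dr,
\end{equation*}
where $F(w):=\int_w^\infty v^{n-\alpha}(1+v)^{-(d+\alpha+1)}\,dv$. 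Under $d>n-\alpha$ (a fortiori $d>n-2\alpha$) the function $F$ is bounded, positive and decreasing on $[0,\infty)$ with tail asymptotics $F(w)\asymp w^{n-d-2\alpha}$ as $w\to\infty$. This explicit identity is the workhorse of both implications.

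For the necessity direction, abbreviate the $\mathcal{K}_{n,\alpha}$ quantity by $\Psi(s,x):=\int_0^s m(x,r)\,r^{\alpha-n-1}\,dr$. Since $F(r/t^{1/\alpha})\geq F(1)>0$ for $r\in[0,t^{1/\alpha}]$, the identity above yields
\begin{equation*}
\alpha(d+\alpha)\,F(1)\,\Psi\bigl(t^{1/\alpha},x\bigr)\leq\Phi(t,x),
\end{equation*}
so uniform smallness of $\Phi(t,x)$ forces uniform smallness of $\Psi(s,x)$ as $s=t^{1/\alpha}\to 0$, i.e.\ $|\varpi|\in\mathcal{K}_{n,\alpha}$.

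The sufficiency direction is where I expect to do the real work. I plan to split the representation of $\Phi(t,x)$ at $r=t^{1/\alpha}$. On $(0,t^{1/\alpha})$ the factor $F$ is bounded by $F(0)$, so that piece is controlled by $F(0)\,\Psi(t^{1/\alpha},x)$, which vanishes uniformly by hypothesis. For the tail $r>t^{1/\alpha}$ I change variable $w=r/t^{1/\alpha}$ and split further into $wt^{1/\alpha}\leq 1$ versus $wt^{1/\alpha}>1$. On the former range I use the elementary dyadic estimate $m(x,r)\leq C\,\Psi(2r,x)\,r^{n-\alpha}$ (obtained by restricting $\Psi(2r,x)$ to $[r,2r]$ and using monotonicity of $m$) to absorb the factor $t^{(\alpha-n)/\alpha}$ produced by the change of variables; the resulting bound is a multiple of $\sup_{s\leq 2t^{1/\alpha}}\Psi(s,x)\cdot\int_1^\infty F(w)/w\,dw$, which is finite and tends to $0$. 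On the latter range I use the crude bound $m(x,r)\leq|\varpi|(\rn)$ together with the polynomial decay of $F$; this contributes $O(t^{(d+2\alpha-n)/\alpha})$, which vanishes thanks to $d>n-2\alpha$.

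The main obstacle will be the tail piece $r>t^{1/\alpha}$ when $n>\alpha$: the naive estimate by the total mass $|\varpi|(\rn)$ alone produces a diverging factor $t^{(\alpha-n)/\alpha}$, so the Kato hypothesis must be exploited for moderate $r$ through the dyadic bound on $m$, while the polynomial decay of $F$ handles the genuinely large values of $r$. Once this interplay is set up correctly, the cases $n\leq\alpha$ require only cosmetic modifications (or follow immediately because the putatively divergent factor is no longer divergent) in the same scheme.
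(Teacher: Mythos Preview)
Your integral identity $\Phi(t,x)=\alpha(d+\alpha)\int_0^\infty m(x,r)\,r^{\alpha-n-1}F(r/t^{1/\alpha})\,dr$ is correct and is a repackaging of the paper's computation: the paper replaces $\mathfrak g_s$ by the comparable kernel $s^{-n/\alpha}(1\wedge s^{1/\alpha}/\|x-y\|)^{d+\alpha}$, integrates in $s$ first, and arrives at the same two pieces after the split at $r=t^{1/\alpha}$. Your necessity argument and the two easy pieces of sufficiency ($r\le t^{1/\alpha}$ and $r>1$) are fine.

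The gap is in the middle range $t^{1/\alpha}<r\le 1$. After your change of variable $w=r/t^{1/\alpha}$ and the dyadic bound $m(x,r)\le C\,\Psi(2r,x)\,r^{n-\alpha}$, the piece becomes a constant times $\int_1^{t^{-1/\alpha}}\Psi(2wt^{1/\alpha},x)\,F(w)\,w^{-1}\,dw$. Here $2wt^{1/\alpha}$ ranges over $[2t^{1/\alpha},2]$, \emph{not} over $[0,2t^{1/\alpha}]$; your claimed bound $\sup_{s\le 2t^{1/\alpha}}\Psi(s,x)\cdot\int_1^\infty F(w)/w\,dw$ therefore does not follow, and without a further argument you only get a constant multiple of $\sup_x\Psi(2,x)$, which does not tend to $0$. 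The repair is a second split at a fixed scale: given $\varepsilon>0$ choose $\delta$ with $\sup_x\Psi(\delta,x)<\varepsilon$, cut the $w$-integral at $w=\delta/(2t^{1/\alpha})$, bound the inner part by $\varepsilon\int_1^\infty F(w)/w\,dw$ and the outer tail by $\sup_x\Psi(2,x)\int_{\delta/(2t^{1/\alpha})}^\infty F(w)/w\,dw\to 0$; integrability of $F(w)/w$ near infinity is exactly $d>n-2\alpha$. The paper handles the same range differently, by partitioning $[t^{1/\alpha},1]$ into the $K_0\asymp t^{-1/\alpha}$ intervals $[kt^{1/\alpha},(k+1)t^{1/\alpha}]$ and bounding the $k$-th contribution by a summable power of $k$ times the Kato increment $\int_{kt^{1/\alpha}}^{(k+1)t^{1/\alpha}} m(x,v)\,v^{\alpha-n-1}\,dv$, so that uniform smallness of the increments and summability in $k$ give the conclusion in one step.
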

\begin{cor}
In particular, it follows from Lemma~\emph{\ref{ka2}} that $\varpi\in S_K$
with respect to the transition probabiility density of a symmetric $\alpha$-stable
process if an only if $\varpi\in\mathcal{K}_{n,\alpha}$.
\end{cor}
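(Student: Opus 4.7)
The plan is as follows. Since the condition $\varpi \in S_K$ only depends on $\mu := |\varpi|$, I would first reduce to the case of a non-negative finite measure. By Fubini,
\[
G(t,x):=\int_0^t\int_{\rn}\mathfrak{g}_s(x-y)\mu(dy)\,ds = \int_{\rn} I_t\bigl(\|x-y\|\bigr)\,\mu(dy),
\]
where $I_t(r) := \int_0^t s^{-n/\alpha}(1+r/s^{1/\alpha})^{-(d+\alpha)}\,ds$. The substitution $u = r/s^{1/\alpha}$ turns this into
\[
I_t(r) = \alpha\, r^{\alpha-n} \int_{r/t^{1/\alpha}}^{\infty} \frac{u^{n-\alpha-1}}{(1+u)^{d+\alpha}}\,du,
\]
and the assumption $d > n-\alpha$ ensures integrability of this tail. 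Examining the tail integral according to whether the lower limit $r/t^{1/\alpha}$ is below or above $1$ yields the two-sided comparison
\[
I_t(r) \asymp \min\Bigl(r^{\alpha-n},\ t^{(d+2\alpha-n)/\alpha}\,r^{-(d+\alpha)}\Bigr).
\]

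Next I would pass from the $\mu$-integral to the volume function $H_x(r) := \mu(B(x,r))$ (essentially the quantity \eqref{h} before taking the supremum in $x$) by a layer-cake computation using the level sets of the radially decreasing function $I_t$. Tracking the two regimes produces the two-piece identity
\[
G(t,x) \asymp \int_0^{t^{1/\alpha}} \frac{H_x(r)}{r^{n+1-\alpha}}\,dr + t^{(d+2\alpha-n)/\alpha}\int_{t^{1/\alpha}}^{\infty} \frac{H_x(r)}{r^{d+\alpha+1}}\,dr.
\]
The substitution $s = r^\alpha$ identifies the first summand with a multiple of $\int_0^t H_x(s^{1/\alpha})s^{-n/\alpha}\,ds$, which vanishes uniformly in $x$ as $t\to 0$ if and only if $|\varpi|\in\mathcal{K}_{n,\alpha}$. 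Since the first summand is trivially dominated by $G(t,x)$, the direction $|\varpi|\in S_K \Rightarrow |\varpi| \in \mathcal{K}_{n,\alpha}$ is immediate.

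The converse I expect to be the main obstacle: a crude estimate $H_x \le \mu(\rn)$ on the outer summand only produces order $t^{(\alpha-n)/\alpha}$, which blows up as $t\to 0$ whenever $n>\alpha$. The key refinement is the algebraic rewriting
\[
\frac{t^{(d+2\alpha-n)/\alpha}}{r^{d+\alpha+1}} = \Bigl(\frac{t^{1/\alpha}}{r}\Bigr)^{d+2\alpha-n}\frac{1}{r^{n+1-\alpha}},
\]
in which the damping factor $(t^{1/\alpha}/r)^{d+2\alpha-n}$ is at most $1$ on the range $r\ge t^{1/\alpha}$ because $d+2\alpha-n>0$ (a consequence of $d>n-\alpha$). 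Splitting the outer range into $[t^{1/\alpha},\varepsilon]$ and $[\varepsilon,\infty)$ for a small fixed $\varepsilon>0$, the near piece is absorbed into the $\mathcal{K}_{n,\alpha}$ tail $\int_0^\varepsilon H_x(r)\,r^{\alpha-n-1}\,dr$, which is small uniformly in $x$ by assumption, while on the far piece the finiteness of $\mu$ combined with the decay $(t^{1/\alpha}/\varepsilon)^{d+2\alpha-n}\to 0$ as $t\to 0$ disposes of the contribution. This yields $\sup_x G(t,x)\to 0$, completing the equivalence.
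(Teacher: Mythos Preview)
Your proposal is correct and follows essentially the same route as the paper's proof of Lemma~\ref{ka2} (which is where the actual content lies; the Corollary itself is just the specialization $d=n$, since the symmetric $\alpha$-stable density is comparable to $\mathfrak{g}_t$ with $d=n>n-\alpha$). You arrive at the same two-term decomposition
\[
\int_0^{t^{1/\alpha}}\frac{H_x(r)}{r^{n+1-\alpha}}\,dr \;+\; t^{(d+2\alpha-n)/\alpha}\int_{t^{1/\alpha}}^\infty\frac{H_x(r)}{r^{d+\alpha+1}}\,dr
\]
that the paper derives in \eqref{ka-eq1}, and you identify the first summand with the $\mathcal{K}_{n,\alpha}$ condition exactly as the paper does.

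The one place your argument differs is in disposing of the second summand on the range $[t^{1/\alpha},1]$. The paper partitions this interval dyadically into $K_0\sim t^{-1/\alpha}$ pieces $[kt^{1/\alpha},(k+1)t^{1/\alpha}]$, pulls out a factor $k^{-(d+2\alpha-n)/\alpha}$ on the $k$th piece, and then combines the summability of $\sum k^{-(d+2\alpha-n)/\alpha}$ with the uniform smallness of each short integral coming from $\mathcal{K}_{n,\alpha}$. Your $\varepsilon$-split is more direct: on $[t^{1/\alpha},\varepsilon]$ you simply discard the damping factor $(t^{1/\alpha}/r)^{d+2\alpha-n}\le 1$ and absorb the piece into the $\mathcal{K}_{n,\alpha}$ integral over $[0,\varepsilon]$, while on $[\varepsilon,\infty)$ the crude bound $H_x\le\mu(\rn)$ and the prefactor $t^{(d+2\alpha-n)/\alpha}\to 0$ suffice. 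Both arguments hinge on the same inequality $d+2\alpha-n>0$; yours avoids the bookkeeping of the dyadic sum at no cost.
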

In the proposition below, we state the ``compact'' upper bound for $p_t^A(x,y)$.
\begin{proposition}\label{pr1}
Let $X$ be a Feller process satisfying the conditions of
Proposition~\emph{\ref{p-kato1}}, and in addition assume that the transition
density $p_t(x,y)$ of $X$ is such that for all $t\in(0,1]$, $x,y\in
\rn$, the inequality
%
\begin{equation}
\label{gam1} p_t(x,y)\leq c \mathfrak{g}_t(x-y), \quad
t\in(0,1], \ x,y\in\rn,
\end{equation}
where the function $\mathfrak{g}_t(x)$ is defined in \eqref{gtx} with
$d>n-\alpha$. Suppose that $\varpi\in\mathcal{K}_{n,\alpha}$. Then
%
\begin{equation}
\label{pr-eq1} p^A_t(x,y)\leq C \mathfrak{g}_t(x-y),
\quad t\in(0,1], \ x,y\in \rn.
\end{equation}
\end{proposition}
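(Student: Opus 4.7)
A natural starting point is Proposition~\ref{p-kato1}, which gives $p^A_t(x,y)=p_t(x,y)\,\Ee^t_{x,y}e^{A_t}$; however, directly estimating the bridge expectation using only an upper bound on $p_t$ is awkward. Instead, I would pass to the perturbation (Duhamel) series: iterating the identity
\[
p^A_t(x,y)=p_t(x,y)+\int_0^t\!\!\int_\rn p^A_{t-s}(x,z)\,p_s(z,y)\,\varpi(dz)\,ds,
\]
one obtains $p^A_t=\sum_{k\ge 0}p^{(k)}_t$ with $p^{(0)}_t=p_t$ and $p^{(k+1)}_t=p_t\lozenge p^{(k)}_t$, where $\lozenge$ is the convolution from the Notation subsection. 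For the purposes of an upper bound one may replace $\varpi$ throughout by $|\varpi|$, reducing the problem to dominating each non-negative term $p^{(k)}_t$ by $C_k\mathfrak{g}_t(x-y)$ with $\sum_k C_k<\infty$.

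The central analytic tool is a ``three-$\mathfrak{g}$'' inequality: there exists $c>0$ such that
\[
\mathfrak{g}_{t-s}(x-z)\,\mathfrak{g}_s(z-y)\leq c\bigl(\mathfrak{g}_{t-s}(x-z)+\mathfrak{g}_s(z-y)\bigr)\mathfrak{g}_t(x-y)
\]
for $0<s<t\leq 1$ and $x,y,z\in\rn$. This is equivalent to the pointwise bound $\min(\mathfrak{g}_{t-s}(x-z),\mathfrak{g}_s(z-y))\leq c\,\mathfrak{g}_t(x-y)$, which I would establish by a short case analysis using the dichotomies $\max(\|x-z\|,\|z-y\|)\geq\|x-y\|/2$ and $\max(t-s,s)\geq t/2$ combined with the polynomial tail structure of $\mathfrak{g}_t$; this is in the spirit of \cite{BJ07}.

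Integrating the 3G inequality against $|\varpi|(dz)\,ds$ and using the standing hypothesis $p_t\leq c\mathfrak{g}_t$ yields
\[
\int_0^t\!\!\int_\rn \mathfrak{g}_{t-s}(x-z)\,\mathfrak{g}_s(z-y)\,|\varpi|(dz)\,ds\leq\phi(t)\,\mathfrak{g}_t(x-y),
\]
where $\phi(t):=2c\sup_x\int_0^t\!\int_\rn\mathfrak{g}_s(x-z)\,|\varpi|(dz)\,ds$. Lemma~\ref{ka2} together with $\varpi\in\mathcal{K}_{n,\alpha}$ gives $\phi(t)\to 0$ as $t\to 0$. Induction on $k$ then yields $p^{(k)}_t\leq c\,(c\phi(t))^k\,\mathfrak{g}_t$; summing on a short interval $(0,t_0]$ on which $c\phi(t_0)<1$ produces the bound $p^A_t\leq C\mathfrak{g}_t$ there, and the extension to $t\in(0,1]$ follows from the semigroup identity $p^A_{t+s}=p^A_t*p^A_s$ together with the convolution estimate $\mathfrak{g}_s*\mathfrak{g}_t\leq c'\mathfrak{g}_{s+t}$, itself a direct consequence of the 3G inequality and the uniform $L^1$-boundedness of $\mathfrak{g}_t$ when $d>n-\alpha$.

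The principal technical obstacle is the 3G pointwise estimate when $d\neq n$: the dimension $n$ appearing in the normalisation $t^{-n/\alpha}$ is decoupled from the spatial decay exponent $d+\alpha$, and the borderline configurations—where neither spatial distance nor time split clearly dominates—require the assumption $d>n-\alpha$ in an essential way. Once this pointwise estimate is in hand, the remainder (Duhamel iteration, Kato-class smallness, semigroup bootstrapping) follows a standard template.
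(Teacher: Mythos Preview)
Your proposal is correct and follows essentially the same route as the paper, which in its proof of Proposition~\ref{pr1} defers to the argument of \cite{So06}: Duhamel expansion $p^A_t=\sum_k p_t^{\lozenge k}$, the 3G-type inequality for $\mathfrak{g}_t$ (valid precisely when $d>n-\alpha$), the Kato-class smallness supplied by Lemma~\ref{ka2}, and induction giving $|p_t^{\lozenge k}|\le c^k\mathfrak{g}_t$ with $c\in(0,1)$ on $(0,t_0]$. Your sketch is in fact more explicit than the paper's own proof (which merely states the outcome of the induction and cites \cite{So06}), and your semigroup bootstrap from $(0,t_0]$ to $(0,1]$ via $\mathfrak{g}_s*\mathfrak{g}_t\le c'\mathfrak{g}_{s+t}$ fills a step the paper leaves implicit.
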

\begin{remark}
{\rm a)} For  $X$ being a symmetric $\alpha$-stable-like
process, such a result is known, see {\rm\cite{So06}}. In particular,
the upper bound \eqref{pr-eq1} holds with~$n=d$. In our case, $X$ is
from a wider class; in particular, we do not assume the symmetry of the
initial process, and the method of constructing the Feynman--Kac
semigroup is completely different.

{\rm b)} In view of Lemma~\ref{ka2}, under the assumptions of this
proposition, we can take $\varpi\in\mathcal{K}_{n,\alpha}$ rather than
$\varpi\in S_K$ with respect to $\mathfrak{g}_t$, which is more
convenient for usage.
\end{remark}

In Section~\ref{exam}, we provide examples that illustrate
Theorem~\ref
{T-FK1} and Proposition~\ref{pr1}.

\subsection{Discussion and overview}
\begin{enumerate}
\item \emph{On continuous additive functionals.}
Loosely speaking, there are two approaches for constructing continuous
additive functionals. One approach, which we described previously,
relies on the Dynkin theory of $W$-functionals.
Another approach, based on the Dirichlet form technique, establishes the
one-to-one correspondence between the class of positive continuous
additive functionals and the class of \emph{smooth measures}, see
\cite
[Lemmas~5.1.7,~5.1.8]{Fu80} or \cite[Thm.~5.1.4]{FOT94} in the case
when the process under consideration is symmetric; see also \cite
[Thm.~2.4]{MR92} for the non-symmetric case.
In this paper, we use Dynkin's approach as more appropriate in our
situation, in particular, we do not assume that the initial Markov
process $X$ is symmetric. Our standard reference in this paper is~\cite{Dy65}.

\item \emph{On the generator of $(T_t^A)_{t\geq0}$}. Suppose that the
Markov process $X$ and the \emph{positive} functional $A_t$ are as in
Proposition~\ref{p-kato1}. In this case, the semigroup $(T_t^A)_{t\geq
0}$ is contractive, and thus there exists a sub-Markov process with
transition sub-probability density $p^A_t(x,y)$. Formally, we can
describe the generator of $(T_t^A)_{t\geq0}$ as
%
\begin{equation}
\label{la} \mathcal{L}^A = \mathcal{L}-\varpi,
\end{equation}
where $\mathcal{L}$ is the generator of the semigroup associated with
$X$, and $\varpi$ is the measure appearing in the characteristic of
$A_t$ (cf. \eqref{chit}), see \cite[Thms.~9.5, 9.6]{Dy65} for the
(equivalent) formulation. Nevertheless, in this framework the problem
of defining the domain $D(\mathcal{L}^A)$ of $\mathcal{L}^A$ still
remains open.
In the general case, that is, when $A$ can attain negative values, in
order to define the generator of (non-contractive) semigroup
$(T_t^A)_{t\geq0}$, we can use the quadratic form approach, see \cite
{AM91,AM92}, and also
\cite{CS03}.
\end{enumerate}

\section{Proof of Theorem~\ref{T-FK1}}\label{Proo1}

\subsection{Proof of statements a) and b)}

a) By the upper bound in \eqref{upper-es} on $p_t(x,y)$ (see also
Remark~\ref{bounds}), \eqref{H1} implies that $\varpi\in S_K$:
\begin{align*}
&\sup_{x\in\rn} \int_0^t \int
_\rn p_s(x,y)|\varpi|(dy)ds
\\
&\quad\leq \sup_{x\in\rn} \int_0^t
\int_\rn\int_\rn\rho_s^n
f_{\mathrm{up}}\bigl((y-x-z)\rho _s\bigr)Q_s(dz)|
\varpi|(dy) ds
\\
&\quad \leq b \sup_{x\in\rn} \int_0^t
\int_\rn\int_0^\infty
\rho_s^n |\varpi| \bigl\{ y: \,\|y-x-z\|\leq v/
\rho_s \bigr\} e^{-b v} dv Q_s(dz)ds
\\
&\quad \leq b \int_0^t \rho_s^{n+1}
\hat{h}(b \rho_s) ds\to0, \quad t\to0.
\end{align*}
Hence, applying \cite[Thm.~6.6]{Dy65}, we derive the existence of a
continuous functional $A_t$ with claimed characteristic.

\medskip

Statement b) is already contained in Proposition~\ref{p-kato1}.

\subsection{Outline of the proof of c)}\label{FK-32}

For the proof of Theorem~\ref{T-FK1}(c), we use the Duhamel principle.
First, we show that the function $p^A_t(x,y)$ satisfies the integral equation
%
\begin{equation}
\label{duh} p^A_t(x,y)= p_t(x,y)+\int
_0^t \int_\rn
p_{t-s}(x,z) p^A_s(z,y)\varpi(dz)ds,
\end{equation}
provided that the integral on the right-hand side converges.
We show that if the series
%
\begin{equation}
\label{ser} \pi_t(x,y):= \sum_{k=1}^\infty
p_t^{\lozenge k } (x,y)
\end{equation}
converges, then it satisfies Eq.~\eqref{duh}. We derive an upper
estimate for the convolutions $p_t^{\lozenge k}(x,y)$, which guarantees
the absolute convergence of the series and allows to find the upper
estimate for $\pi_t(x,y)$.

Second, we show that on $(0,t_0]\times\rn\times\rn$ the solution
\eqref{ser} to \eqref{duh} is unique in the class of non-negative
functions $\{f(t,x,y)\geq0,\, t\in(0,t_0],\, x,y\in\rn\}$ such that
%
\begin{equation}
\label{duh1} \int_\rn f(t,x,y)dy\leq C \quad\text{for
all} \ t\in(0,t_0], \, x\in\rn.
\end{equation}
We use the standard method, based on the Gronwall--Bellman inequality.

Finally, observe that the kernel $p^A_t(x,y)$ of $T^A_t$ belongs to the
class of functions satisfying \eqref{duh1}. Indeed, since for $A_t$ we
have \eqref{gaug1}, it follows that
%
\begin{equation}
\label{duh2} \big| T_t^A f(x) \big| \leq c_1
\Ee^x e^{|A_t|}\leq c_2, \quad f\in
B_b\bigl(\rn\bigr), \ x\in\rn, \ t\in(0,t_0].
\end{equation}
Thus, $p_t^A(x,y)\equiv\pi_t(x,y)$ on $(0,t_0]\times\rn\times\rn$.

Before we prove that \eqref{ser} is the solution to Eq.~\eqref{duh} on
$(0,t_0]\times\rn\times\rn$, let us discuss a simple case when
$\varpi$ is the Lebesgue measure on $\rn$. In this case $h(r)=c_n
r^n$, and thus
assumption \eqref{H1} is satisfied:
\[
\int_0^t \rho_s^{n+1}
\hat{h}(\rho_s)ds= c_n t.
\]
Therefore, the procedure of estimation of convolutions reduces to those
treated in \cite[Lemmas 3.1, 3.2]{KK13a}.

Rewrite the upper bound in \eqref{upper-es} as
%
\begin{equation}
\label{phi91} p_t(x,y)\leq C_1 t^{-1/2}
\bigl(g_t^{(1)} * Q_t \bigr) (y-x),
\end{equation}
where $C_1>0$ is some constant,
%
\begin{equation}
\label{gt1} g_t^{(1)}(x):= t^{1/2}
g_t(x),
\end{equation}
and (cf. Remark~\ref{bounds})
%
\begin{equation}
\label{gt} g_t(x):= \rho_t^n
f_{\mathrm{up}}(\rho_t x)=\rho_t^n
e^{-b \rho_t |x|}.
\end{equation}
This modification is technical, but proves to be useful for estimating
the convolutions $p^{\lozenge k }_t(x,y)$.
Let us estimate $p^{\lozenge k }_t(x,y)$.
Take now a sequence $(\theta_k)_{k\geq1} $ such that $0<\theta
_{k+1}<\theta_k$, $\theta_1=1$, and put
%
\begin{equation}
\label{gtk} g_t^{(k)}(x):= t^{k/2}
g_t (\theta_k x), \quad k\geq1.
\end{equation}
Since $\rho_t$ is monotone decreasing, for $0<s<\frac{t}{2}$, we have
$\rho_{t-s}\leq\rho_{t/2}$. Note that $\rho_t\asymp\rho_{t/2}$; this
follows from condition \textrm{A1} and the definition of $\rho_t$; see
\cite{KK12a} for the detailed proof. Then, for $0<s<t/2$,
%
\begin{align}
 \bigl(g^{(k-1)}_{t-s} *
g_s^{(1)} \bigr) (x)&\leq t^{k/2} \int
_\rn g_{t-s}( \theta_{k-1}x -
\theta_{k-1}y) g_s (\theta_{k-1}y)dy\notag
\\
& = t^{k/2} \theta_{k-1}^{-n} \int
_\rn g_{t-s} (\theta_{k-1} x -y)
g_s(y)dy\notag
\\
& \leq t^{k/2} \theta_{k-1}^{-n} \int
_\rn\rho_{t-s}^n \rho_s^n
e^{-\frac{b\rho_t \theta_k}{\theta_{k-1}} (|\theta
_{k-1}x-z|+|z|)-b\rho
_s (1-\frac{\theta_k}{\theta_{k-1}})|z|}dz\notag
\\
&\leq c_1 t^{k/2} \theta_{k-1}^{-n}
\rho_t^n e^{-b\rho_t \theta_k
|x|}\int_\rn
\rho_t e^{-b\rho_s (1-\frac{\theta_k}{\theta
_{k-1}})|z|}dz\notag
\\
&= D_k g_t^{(k)}(x),\label{gtk2}
\end{align}
where
$D_k
=c  (\theta_{k-1}-\theta_k )^{-n}$, $c=c_1 \int_\rn e^{-b|z|}dz$,
and in the second line from below, we used the triangle inequality and
monotonicity of $\rho_t$. In the case $t/2\leq s\leq t$, calculation
is similar.

By induction we can get
%
\begin{equation}
\label{gkk} \big| p^{\lozenge k }_t(x,y) \big|\leq C_k
t^{\frac{k}{2}-1} \bigl( g_t^{(k)} * Q_t^{(k)}
\bigr) (y-x), \quad k\geq2,
\end{equation}
where
\[
C_k := c^{k-1}C_1^k
\frac{\varGamma^k(1/2)}{\varGamma(k/2)} \prod_{j=2}^k
\frac{1}{(\theta_{j-1}-\theta_j)^n},
\]
and for $k\geq2$
\[
Q_t^{(k)}(dw):=\frac{1}{B(\frac{k-1}{2},\frac{1}{2})} \int_0^1
\int_\real(1-r)^{(k-1)/2-1/2} r^{-1/2}
Q_{t(1-r)}^{(k-1)}(dw-u)Q_{tr}^{(1)}(du)dr.
\]
Since $\{Q_t^{(k)}, \, t>0, \, k\geq1\}$ is the sequence of
sub-probability measures and $g_t^{(k)}(x)\leq\rho_t^n t^{k/2}$, we obtain
\[
\big| p^{\lozenge k }_t(x,y)\big |\leq C_k t^{k-1}
\rho_t^n.
\]
Thus, to show the absolute convergence of the series $\sum_{k=1}^\infty
p^{\lozenge k }_t(x,y)$, we may check that $\sum_{k=1}^\infty
C_k<\infty
$. However, the behaviour of $C_k$ as $k\to\infty$ is rather
complicated. To see this, take, for example, $\theta_k =\frac{1}{2} +
\frac{1}{2k}$. Then
\[
C_k= c^{k-1} C_1^k
\frac{\varGamma^k(1/2)}{\varGamma(k/2)} \bigl(2^k k!(k-1)!\bigr)^n,
\]
and thus $C_k$ explodes as $k\to\infty$. Therefore, this procedure of
estimation of convolutions is too rough, and needs to be modified. For
this, we change the estimation procedure after some finite number of
steps; this allows us to control the decay of coefficients and, in such
a way, to prove that $\sum_{k=1}^\infty p^{\lozenge k }_t(x,y)<\infty$.

In the next subsection, we handle the general case, in particular,
\begin{itemize}
\item We give the generic calculation, which allows us to estimate the
convolution $ (g_{t-s}\,\sq\, g_s )(x)$;

\item We estimate the convolutions $p^{\lozenge k }_t(x,y)$, $k\geq2$;

\item We change the estimation procedure after $k_0$ steps, where $k_0$
is properly chosen, and estimate $p^{\lozenge(k_0+\ell) }_t(x,y)$,
$\ell\geq1$.
\end{itemize}

The change of the estimation procedure could be unnecessary if we would
know that $p_t(x,y)$ possesses a more regular upper bound than \eqref
{upper-es}. In this case, we obtain a sufficient control on the
coefficients $C_k$, $k\geq1$. This is exactly the case under the
conditions of Proposition~\ref{pr1}.

\subsection{Representation lemma, generic calculation, and estimation
of convolutions}\label{aux}
\medskip

\begin{lemma}\label{pv}
The function $p_t^A(x,y)$ given by \eqref{pv10} satisfies Eq.~\eqref{duh}.
\end{lemma}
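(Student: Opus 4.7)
The plan is to derive~\eqref{duh} from the pathwise identity
\begin{equation*}
e^{A_t} = 1 + \int_0^t e^{A_t - A_s} \, dA_s,
\end{equation*}
which holds by the Stieltjes chain rule because $A = A^+ - A^-$ is continuous and of locally bounded variation. Multiplying by $f(X_t)$ and taking $\Ee^x$ gives
\begin{equation*}
T_t^A f(x) = T_t f(x) + \Ee^x\!\left[\int_0^t e^{A_t - A_s} f(X_t) \, dA_s\right].
\end{equation*}

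Next I would combine the Markov property with the defining property of the $W$-functionals $A^\pm$. By additivity~\eqref{ad1}, $A_t - A_s = A_{t-s}\circ\theta_s$ and $X_t = X_{t-s}\circ\theta_s$, so
\begin{equation*}
\Ee^x\bigl[e^{A_t - A_s} f(X_t) \mid \Ff_s\bigr] = T_{t-s}^A f(X_s).
\end{equation*}
Since $dA_s$ is a pathwise Stieltjes measure attached to the continuous (hence predictable) process~$A$, one may substitute this conditional expectation inside the integral, with the uniform integrability supplied by the Khasminski bound~\eqref{gaug1}. Then the Revuz-type identity for $W$-functionals (cf.~\cite[Thm.~6.3]{Dy65}), applied to the bounded Borel function $g(s,z):=T_{t-s}^A f(z)$, yields
\begin{equation*}
\Ee^x\!\left[\int_0^t T_{t-s}^A f(X_s) \, dA^\pm_s\right] = \int_0^t \int_\rn p_s(x,z)\, T_{t-s}^A f(z)\, \varpi^\pm(dz)\, ds,
\end{equation*}
and subtracting the $\pm$ pieces gives
\begin{equation*}
T_t^A f(x) = T_t f(x) + \int_0^t \int_\rn p_s(x,z)\, T_{t-s}^A f(z)\, \varpi(dz)\, ds.
\end{equation*}
Changing variable $s \mapsto t-s$, inserting the kernel representations $T_t f(x) = \int_\rn p_t(x,y) f(y)\, dy$ and $T_s^A f(z) = \int_\rn p_s^A(z,y) f(y)\, dy$ from Proposition~\ref{p-kato1}, applying Fubini, and invoking the arbitrariness of $f \in B_b(\rn)$ yields~\eqref{duh}.

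The main technical obstacle is justifying the substitution of the conditional expectation inside the Stieltjes integral, because the integrand $e^{A_t - A_s} f(X_t)$ depends on the entire future of time~$s$, so the Revuz formula is not directly applicable. I would handle this by approximating $\int_0^t (\cdot)\, dA^\pm_s$ with Riemann--Stieltjes sums along a partition $\{s_i\}$, applying the tower property at each $\Ff_{s_i}$ (so that on each subinterval the past-measurable factor can be pulled out and the forward-pointing piece replaced by $T_{t-s_i}^A f(X_{s_i})$), and passing to the limit via dominated convergence, using~\eqref{gaug1} to control $\sup_s e^{|A_t - A_s|}$ in $L^1(\Pp^x)$ and the continuity of $s\mapsto A_s^\pm$ to absorb the error terms.
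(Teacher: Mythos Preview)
Your proposal is correct and follows essentially the same route as the paper: the pathwise identity $e^{A_t}=1+\int_0^t e^{A_t-A_s}\,dA_s$, then additivity and the Markov property to replace the integrand by $T_{t-s}^A f(X_s)$, and finally the characteristic/Revuz formula for the $W$-functional to convert $\Ee^x\int_0^t g(s,X_s)\,dA_s$ into $\int_0^t\int_{\rn} p_s(x,z)g(s,z)\,\varpi(dz)\,ds$. The paper handles the last step by verifying \eqref{char10} first for indicators (where it is immediate from the definition of the characteristic~\eqref{chipm}) and then passing to general $f\in B_b(\rn)$ by approximation, which is a slightly more direct alternative to your Riemann--Stieltjes argument; otherwise the arguments coincide.
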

\begin{proof}
In the case when $X$ is a symmetric stable-like process and $\varpi
\in
S_K$ with respect to the transition probability density of $X$, the sketch of the
proof is given in~\cite{So06}. In the general case, the proof is the
same; in order to make the presentation self-contained, we present it
below. Using the equality
\[
e^{A_t}= \int_0^t
e^{A_t-A_s}dA_s+1,
\]
the strong Markov property of $X$, and the additivity of $A_t$ (cf.
\eqref{ad1}), we write
\begin{align*}
T^A_t f(x)&=\Ee^x \bigl[
f(X_t)e^{A_t} \bigr]
\\
&=\Ee^x f(X_t) + \Ee^x \Biggl[ \int
_0^t \bigl[ f(X_t) e^{A_t-A_s}
\bigr] dA_s \Biggr]
\\
&= \Ee^x f(X_t) + \Ee^x \Biggl[ \int
_0^t \Ee^{X_s} \bigl[
f(X_{t-s}) e^{A_{t-s}}\bigr] dA_s \Biggr]
\\
&=\Ee^x f(X_t)+ \Ee^x \int
_0^t T_{t-s}^A
f(X_s) dA_s. 
\end{align*}
Observe that for $f\in B_b(\rn)$, we have
%
\begin{equation}
\label{char10} \Ee^x \int_0^t
f(X_s)dA_s=\int_0^t
\int_\rn f(y)p_s(x,y)\varpi(dy)ds.
\end{equation}
Indeed, since $\chi_t= \chi_t^+-\chi_t^-$ with $\chi_t^\pm$ given by
\eqref{chipm} is the characteristic of $A_t$, Eq.~\eqref{char10} holds
for a finite linear combination of indicators. Approximating $f\in
B_b(\rn)$ by such linear combinations and passing to the limit, we get
\eqref{char10}.
\end{proof}

For $\theta\in[0,1]$, put
%
\begin{equation}
\label{g10} g_{t,\theta}(x):= g_t(\theta x),
\end{equation}
where $g_t(x)$ is defined in \eqref{gt}, and
%
\begin{equation}
\label{ph1} \phi_\nu(s):= \rho_s^{n+1}
\hat{h}(\nu\rho_s), \quad\nu>0,
\end{equation}
where $h$ is the volume function (cf. \eqref{h}) appearing in condition
\eqref{H1}. Lemma below gives the generic calculation, needed for the
proof of Theorem~\ref{T-FK1}.
\begin{lemma}\label{con-g}
For $\theta\in(0,1)$, we have
%
\begin{equation}
\label{g-conv10} (g_{t-s}\,\sq\, g_s ) (x)\leq C\bigl[
\phi_{(1-\theta)b} (t-s)+\phi _{(1-\theta)b}(s)\bigr] g_{t,\theta}( x),
\quad x\in\rn, \ 0<s<t\leq1,
\end{equation}
where $C>0$ is some constant, independent of $\theta$, and $b>0$ comes
from the definition of $g_t$, see \eqref{gt}.
\end{lemma}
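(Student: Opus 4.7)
The plan is to pull the factor $e^{-b\theta\rho_t|x|}$ out of the product $g_{t-s}(x-z)g_s(z)$ and then bound the remaining integration against $|\varpi|$ by a Laplace-transform estimate on the volume function $h$. Since $\rho_r$ is non-increasing (being the generalized inverse of the non-decreasing function $q^*$), we have $\rho_{t-s},\rho_s\ge\rho_t$, so combining with the triangle inequality $|x-z|+|z|\ge|x|$ gives
\[
b\rho_{t-s}|x-z|+b\rho_s|z|\ge\theta b\rho_t|x|+(1-\theta)\bigl(b\rho_{t-s}|x-z|+b\rho_s|z|\bigr),
\]
and consequently
\[
g_{t-s}(x-z)g_s(z)\le\rho_{t-s}^n\rho_s^n\,e^{-b\theta\rho_t|x|}\,e^{-(1-\theta)b\rho_{t-s}|x-z|}\,e^{-(1-\theta)b\rho_s|z|}.
\]

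For the remaining integral I would use the elementary bound
\[
\int_\rn e^{-\lambda|y-z|}|\varpi|(dz)=\int_0^\infty\lambda e^{-\lambda u}\,|\varpi|\bigl\{z:|y-z|\le u\bigr\}du\le\lambda\hat h(\lambda),
\]
valid for every $y\in\rn$ and $\lambda>0$ by the very definition of $h$. Since the product of the two remaining exponentials is dominated by either factor separately, discarding one and integrating yields
\[
\int_\rn e^{-(1-\theta)b\rho_{t-s}|x-z|}\,e^{-(1-\theta)b\rho_s|z|}\,|\varpi|(dz)\le(1-\theta)b\min\bigl\{\rho_{t-s}\hat h((1-\theta)b\rho_{t-s}),\,\rho_s\hat h((1-\theta)b\rho_s)\bigr\}.
\]

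To conclude I would split into the regimes $s\le t/2$ and $s\ge t/2$. In the first, $t-s\ge t/2$, so $\rho_{t-s}\le\rho_{t/2}\le C\rho_t$ by the comparability $\rho_t\asymp\rho_{t/2}$ recalled from~\cite{KK12a}; selecting the $\rho_s$ term in the minimum above gives
\[
(g_{t-s}\sq g_s)(x)\le C(1-\theta)b\,\rho_t^n\rho_s^{n+1}\hat h\bigl((1-\theta)b\rho_s\bigr)e^{-b\theta\rho_t|x|}=C'\phi_{(1-\theta)b}(s)g_{t,\theta}(x),
\]
and the regime $s\ge t/2$ is symmetric and produces the bound with $\phi_{(1-\theta)b}(t-s)$. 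Adding the missing (nonnegative) $\phi$ term on the right gives the asserted two-term sum, with constant $C$ independent of $\theta$ since $(1-\theta)b\le b$. The subtle point is precisely this case split: trying to retain both exponential decays at once leaves an uncontrollable factor of $\rho_s^n$ or $\rho_{t-s}^n$ outside the $\phi$ bracket, and it is the distinction combined with the doubling $\rho_t\asymp\rho_{t/2}$ that allows one to replace the stray factor by $\rho_t^n$ as required.
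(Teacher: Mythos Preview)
Your proof is correct and follows essentially the same approach as the paper: extract the factor $e^{-b\theta\rho_t\|x\|}$ via the triangle inequality and the monotonicity of $\rho_\cdot$, then split into the cases $s\le t/2$ and $s\ge t/2$, in each case discarding the ``large-time'' exponential factor and bounding the corresponding $\rho^n$ by $C\rho_t^n$ through the doubling relation $\rho_t\asymp\rho_{t/2}$, and finally integrating the remaining exponential against $|\varpi|$ via the layer-cake/Laplace-transform identity to produce $\phi_{(1-\theta)b}$. Your observation that $(1-\theta)b\le b$ makes the constant $\theta$-independent is exactly the point the paper implicitly uses.
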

\begin{proof}
Take $\theta\in(0,1)$. Since by definition the function $\rho_t$ is
decreasing, we have
\[
\|x-z\|\rho_{t-s} +\|z\|\rho_s \geq\|x\|
\rho_t,
\]
which implies
\begin{align*}
&(g_{t-s} \,\sq\, g_s ) (x)
\\
&\quad\leq e^{-\theta b \|x-y\|\rho_t} \rho_{t-s}^n \rho_s^n
\int_\rn \bigl[ f_{\mathrm{up}}\bigl((z-x)
\rho_{t-s}\bigr) f_{\mathrm{up}}\bigl((y-z)\rho_s\bigr)
\bigr]^{(1-\theta)}|\varpi|(dz).
\end{align*}
By integration by parts we derive, using that $\rho_t$ is monotone decreasing, that
%
\begin{align}
 &\int_\rn
\rho_{t-s}^n\rho_s^n
\bigl[f_{\mathrm{up}}\bigl((x-z) \rho_{t-s}\bigr) f_{\mathrm{up}}
\bigl((z-y)\rho _s\bigr)\bigr]^{1-\theta} |\varpi|(dz)\notag
\\
& \quad\leq\rho_{t/2}^n \int_\rn
\rho_s^n f_{\mathrm{up}}^{1-\theta} \bigl((z-y)
\rho _s\bigr)|\varpi|(dz)\notag
\\
& \quad\leq c_1 \rho_t^n \rho_s^n
\int_0^\infty|\varpi|\bigl\{ z: e^{-b(1-\theta
)\|z-y\|\rho_s }
\geq e^{-v}\bigr\} e^{-v}dv\notag
\\
&\quad= (1-\theta)b c_1 \rho_t^n
\rho_s^n \int_0^\infty|
\varpi|\bigl\{ z: \| z-y\|\leq v /\rho_s\bigr\} e^{-b(1-\theta)v}dv\notag
\\
&\quad\leq(1-\theta)b c_1 \rho_t^n
\rho_s^n \int_0^\infty
h(v /\rho_s) e^{- b(1-\theta)v}dv\notag
\\
&\quad= c_1 \rho_t^n \rho_s^{n+1}
\hat{h}\bigl(b(1-\theta)\rho_s\bigr)\notag
\\
&\quad= c_1 \rho_t^n \phi_{b(1-\theta)}(s).\label{conv10}
\end{align}
Similar estimate holds true for $s>\frac{t}{2}$, which finishes the proof of
\eqref{g-conv10}.
\end{proof}

Take a sequence $(\theta_k)_{k\geq1}$ such that
%
\begin{equation}
\label{tet} \theta_1=1, \qquad\theta_k>0, \qquad
\theta_{k-1}>\theta_k, \quad k\geq2.
\end{equation}
Let
%
\begin{equation}
\label{k0} k_0:= \biggl[\frac{n}{\alpha\zeta} \biggr],\vadjust{\goodbreak}
\end{equation}
where $\zeta$ is the parameter appearing in \eqref{H1}.
Define
%
\begin{equation}
\label{kap} \kappa:= \min\bigl\{ b (\theta_{j-1}-
\theta_j), \ 1\leq j \leq k_0\bigr\},
\end{equation}
%
\begin{equation}
\label{F1} F(t):= \int_0^t
\phi_\kappa(r)dr,
\end{equation}
and
%
\begin{equation}
\label{gtk-new} \tilde{g}_t^{(k)}(x):= %
\begin{cases}
g_{t,\theta_k}(x) F^{k-1} (t), &\ 1\leq k\leq k_0, \\
e^{-b\theta_{k_0} \rho_t \|x\|} F^{k-k_0}(t), &\ k> k_0,
\end{cases}
\end{equation}
where $g_{t,\theta}(x)$ is defined in \eqref{g10}.

Finally, define inductively the sequence of measures
\[
\mathcal{R}_t^{(1)}(dw):= Q_t(dw) \quad
\text{if}\ k=1,
\]
%
\begin{equation}
\label{rt} \mathcal{R}_t^{(k)}(dw):= \bigl(2 F(t)
\bigr)^{-1}\int_0^t\int
_\rn\bigl[ \phi_\kappa(t-s) +\phi _\kappa
(s) \bigr] Q_{t-s}(dw-u)\mathcal{R}_{s}^{(k-1)}(du)ds
\end{equation}
%
if $k\geq2$. Since $(Q_t)_{t\geq0}$ is the family of sub-probability
measures (see Theorem~\ref{t-main}), we have
\begin{align*}
\mathcal{R}^{(2)}_t\bigl(\rn\bigr)\leq\bigl(2F(t)
\bigr)^{-1} \int_0^t \bigl[
\phi_\kappa (t-s)+\phi_\kappa(s)\bigr] Q_{t-s}\bigl(
\rn\bigr)Q_s\bigl(\rn\bigr) ds\leq1,
\end{align*}
and we can see by induction that $\mathcal{R}^{(k)}_t(\rn)\leq1$,
$t\in[0,1]$, for all $k\geq2$.

\begin{lemma}\label{Pk} For $k\geq2$ we have
%
\begin{equation}
\label{P1-ek} \big|p_t^{\lozenge k} (x,y)\big|\leq\tilde{C}_k
\bigl(\tilde{g}_t^{(k)} * \mathcal{R}^{(k)}_t
\bigr) (y-x), \quad x,y\in\rn, \ t\in(0,1],
\end{equation}
where the sequence $ (\tilde{g}_t^{(k)} )_{k\geq1} $ is given by
\eqref{gtk-new}, $\mathcal{R}^{(k)}_t$ is defined in \eqref{rt},
$k\geq
2$, and for $k>k_0$, the constants $\tilde{C}_k$ can be expressed as
\[
\tilde{C}_k= C^{k-k_0} M,
\]
where $M,C>0$ are some constants.
\end{lemma}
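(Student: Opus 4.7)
The plan is to prove the estimate by induction on $k \geq 2$, treating the ranges $2 \leq k \leq k_0$ and $k > k_0$ separately, in both cases starting from the recursion $p_t^{\lozenge k} = p_t \lozenge p_t^{\lozenge(k-1)}$ and using the upper bound \eqref{phi91} on $p_t$.

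For the base case $k = 2$, I would substitute the upper bound into
\[
p_t^{\lozenge 2}(x,y) = \int_0^t \int_{\rn} p_{t-s}(x,z) p_s(z,y)\varpi(dz)\,ds,
\]
apply Fubini to move the $Q_{t-s}$ and $Q_s$ integrations outside the $\varpi$-integration, and invoke Lemma \ref{con-g} with $\theta = \theta_2$ to bound the inner kernel convolution $(g_{t-s}\,\sq\, g_s)$ by $C[\phi_{b(1-\theta_2)}(t-s) + \phi_{b(1-\theta_2)}(s)]\, g_{t,\theta_2}$. Since $b(1-\theta_2) \geq \kappa$ by \eqref{kap} and $\hat h$ is monotone, these $\phi$'s are dominated by $\phi_\kappa$; integrating in $s$ yields the factor $2F(t)$, which combined with the definition \eqref{rt} of $\mathcal{R}_t^{(2)}$ produces exactly the claimed form with constant $\tilde C_2$.

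For the inductive step with $2 < k \leq k_0$, the recursion combined with the inductive hypothesis and the upper bound on $p_{t-s}$ reduces the task, after Fubini, to controlling an integral of the form $\int_0^t F^{k-2}(s) (g_{t-s}\,\sq\, g_{s,\theta_{k-1}})(\cdot) \,ds$ convolved against $Q_{t-s}$ and $\mathcal{R}_s^{(k-1)}$. A rescaling brings the inner kernel convolution to the framework of Lemma \ref{con-g} and yields a factor $C(\theta_{k-1}-\theta_k)^{-n}[\phi_\kappa(t-s) + \phi_\kappa(s)]\, g_{t,\theta_k}$. Using $F^{k-2}(s) \leq F^{k-2}(t)$ and integrating in $s$ gives $F^{k-1}(t)$, with the normalising denominator $(2F(t))^{-1}$ in \eqref{rt} absorbing one factor of $F(t)$ into $\mathcal{R}_t^{(k)}$. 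This yields the recursion $\tilde C_k \leq C(\theta_{k-1}-\theta_k)^{-n}\tilde C_{k-1}$, which is finite (though not uniformly bounded) for $k \leq k_0$.

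For $k > k_0$ I freeze the exponential parameter at $\theta_{k_0}$. At each further iteration, the $\varpi$-integration of $g_{t-s}$ against $e^{-b\theta_{k_0}\rho_s\|\cdot\|}|\varpi|$ is estimated directly, without splitting off a new exponential, so no $(\theta_{k-1}-\theta_k)^{-n}$ loss occurs; the $\rho_{t-s}^n$ prefactor of $g_{t-s}$ is absorbed into a single factor $F(t)$, thanks to \eqref{H1} and the calibration \eqref{k0} of $k_0$ which ensures that $\rho_t^n F(t)$ stays bounded on $(0,1]$. Hence $\tilde C_k \leq C\tilde C_{k-1}$ for $k > k_0$, giving $\tilde C_k = C^{k-k_0} M$ with $M := \tilde C_{k_0}$. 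The main obstacle is precisely this transition at $k = k_0$: one has to show that dropping the $\rho_t^n$ prefactor from $\tilde g_t^{(k)}$ for $k > k_0$ is compensated by a single additional factor $F(t)$, and that this direct estimate of the inner $\varpi$-convolution (rather than a reapplication of Lemma \ref{con-g}) keeps the constants controlled. The remaining bookkeeping -- checking that the measures produced by the induction coincide with \eqref{rt} and satisfy $\mathcal{R}_t^{(k)}(\rn) \leq 1$ -- is routine once the core estimate is in place.
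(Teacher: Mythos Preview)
Your proposal follows the paper's own proof closely: induction on $k$, Lemma~\ref{con-g} for the $\sq$-convolution in the range $2\le k\le k_0$, and a direct estimate with frozen exponent $\theta_{k_0}$ for $k>k_0$. Two points of bookkeeping, however, deserve correction.

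First, the factor $(\theta_{k-1}-\theta_k)^{-n}$ you introduce in the inductive step for $k\le k_0$ is an artefact of the \emph{Lebesgue} convolution discussed in the outline (Section~\ref{FK-32}), where one genuinely integrates $\int_{\rn}e^{-b\rho_s(1-\theta_k/\theta_{k-1})\|z\|}dz$. In the $\sq$-convolution against $|\varpi|$ that you actually need here, the remaining exponential is integrated by parts against $|\varpi|$ and produces $\phi_{b(\theta_{k-1}-\theta_k)}(s)$; the dependence on $\theta_{k-1}-\theta_k$ sits entirely in the subscript of $\phi$, which is then dominated by $\phi_\kappa$. The constant $C$ in Lemma~\ref{con-g} is explicitly stated to be independent of $\theta$, so no rescaling loss occurs and your recursion for $\tilde C_k$ is too pessimistic (though harmless, since only finiteness for $k\le k_0$ is needed).

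Second, at the transition step you write that ``$\rho_t^n F(t)$ stays bounded''. This is in general false: with $\rho_t\asymp t^{-1/\alpha}$ and $F(t)\lesssim t^{\zeta}$ one has $\rho_t^nF(t)\asymp t^{\zeta-n/\alpha}$, which blows up precisely when $k_0>1$. The correct statement, which is \eqref{k00}, is that $\rho_t^n F^{k_0}(t)$ is bounded. It is the accumulated factor $F^{k_0-1}(t)$ carried in $\tilde g_t^{(k_0)}$, together with one further $F(t)$ gained from integrating $\phi_\kappa$, that absorbs the prefactor $\rho_t^n$ when passing from $\tilde g^{(k_0)}$ to $\tilde g^{(k_0+1)}$. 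Once this is stated correctly, your description of the $k>k_0$ iteration (no further shrinking of $\theta$, hence geometric control $\tilde C_k\le C\tilde C_{k-1}$) matches the paper exactly.
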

\begin{proof}
We use induction. Rewrite the upper estimate on $p_t(x,y)$ in the form
\eqref{phi91}. For $k=2$ we get, using \eqref{phi91} and \eqref
{conv10}, the following estimates:
%
\begin{align}
\big|p_t^{\lozenge2}(x,y)\big|&
\leq C_1^2\int_0^t\int
_{\mathbb{R}^{2n}} \biggl[\int_{\rn}
\tilde{g}_s^{(1)}(z-x-w_1)
\tilde{g}_{t-s}^{(1)}(y-z-w_2) |\varpi|(dz) \biggr]\notag
\\
& \quad\cdot Q_{t-s}(dw_1) Q_s(dw_2)ds\notag
\\
&\leq C_2 \int_\rn g_{t,\theta_2} (x-w)
\Biggl\{ \int_0^t \bigl[\phi
_{b(\theta
_1-\theta_2)}(t-s)+\phi_{b(\theta_1-\theta_2)}(s)\bigr]\notag
\\
&\quad\cdot\int_\rn Q_{t-s}(dw-u)Q_s(du)ds
\Biggr\}\notag
\\
&\leq C_2 \int_\rn g_{t,\theta_2} (x-w)
\Biggl\{ \int_0^t \bigl[\phi
_{\kappa
}(t-s)+\phi_{\kappa}(s)\bigr]\notag
\\
&\quad\cdot\int_\rn Q_{t-s}(dw-u)Q_s(du)ds
\Biggr\}\notag
\\
&\leq2 C_2 F(t) \bigl(g_{t,\theta_2} * \mathcal{R}^{(2)}
\bigr) (y-x)\notag
\\
&= 2 C_2 \bigl(\tilde{g}_t^{(2)} *
\mathcal{R}^{(2)} \bigr) (y-x),\label{P10}
\end{align}
where $C_1>0$ comes from \eqref{phi91}, and in the third line from
below we used that by the definition of $\kappa$ and monotonicity of
$\phi_\nu$ in $\nu$,
\[
\phi_{b(\theta_{j-1}-\theta_j)}(t)\leq\varphi_\kappa(t), \quad t\in(0,1].
\]
Suppose that \eqref{P1-ek} holds for some $2\leq k\leq k_0$. Then
%
\begin{align}
  \big|p_t^{\lozenge(k+1)} (x,y)\big|
&\leq2^{k-1}C_k C_1 \int_0^t
\int_\rn \bigl(\tilde{g}_{t-s}^{(1)} *
Q_{t-s} \bigr) (z-x)\notag
\\
&\quad\cdot \bigl(\tilde{g}_s^{(k)} *
\mathcal{R}^{(k)}_s \bigr) (y-z)dzds\notag
\\
&=2^{k-1}C_k C_1 \int_0^t
\int_\rn\int_\rn \bigl(
\tilde{g}_{t-s}^{(1)} \sq\tilde{g}_{s}^{(k)}
\bigr) (y-x-w_1-w_2)\notag
\\
&\quad\cdot Q_{t-s} (dw_1) \mathcal{R}^{(k)}_s(dw_2)ds.\label{e10}
\end{align}
By the same argument as those used in the proof of Lemma~\ref{con-g},
we have
\begin{align*}
\bigl(\tilde{g}_{t-s}^{(1)} \,\sq\,\tilde{g}_{s}^{(k)}
\bigr) (x) & \leq ( g_{t-s,\theta_k}\,\sq\, g_{s,\theta_k} ) (x)F^{k-1}
(t)
\\
&\leq c_{k+1} g_{t,\theta_{k+1}}(x) F^{k-1} (t) \bigl[
\phi_{b(\theta
_{k-1}-\theta_k)}(t-s)+\phi_{b(\theta_{k-1}-\theta_k)}(s)\bigr]
\\
&=c_{k+1} \bigl(F(t)\bigr)^{-1}\bigl[\phi_\kappa(t-s)+
\phi_\kappa(s)\bigr] \tilde {g}_{t}^{(k+1)}(x).
\end{align*}
Substituting this estimate into \eqref{e10}, performing the change of
variables and normalizing, we get \eqref{P1-ek} for $2\leq k\leq k_0$.

Take $c_0>0$. Note that for some $c_1>0$, we have $c_0 \rho_t \leq
\rho
_{c_1 t}$, $t\in(0,1]$. Then, by \eqref{H1},
\begin{align*}
\int_0^t \rho_t^{n+1}
\hat{h}(c_0 \rho_t)dt \leq c_2 \int
_0^t \rho_{c_1
t}^{n+1}
\hat{h}(\rho_{c_1 t})dt \leq c_3 \int_0^{c_1t}
\rho_t^{n+1} \hat{h}(\rho_t)dt\leq
c_4 t^\zeta.
\end{align*}
Therefore, taking $k_0$ as in \eqref{k0}, we get
%
\begin{equation}
\label{k00} \rho_t^n F^{k_0}(t)\leq
c_5 t^{-n/\alpha+k_0\zeta} \leq c_6, \quad t\in[0,1].
\end{equation}
In such a way, on the $(k_0+1)$-th step, we obtain
\begin{align*}
\bigl(\tilde{g}_{t-s}^{(k_0)} \,\sq\,\tilde{g}_{s}^{(1)}
\bigr) (x) &\leq c e^{-b\theta_{k_0}\rho_t \|x\|}\int_\rn
e^{-b\rho_s (1-\theta
_{k_0})\|
z-x\|}|\varpi|(dz)
\\
&= c e^{-b\theta_{k_0}\rho_t \|x\|} \int_0^\infty|\varpi|\bigl
\{ z:\, \rho _s b(1-\theta_{k_0}) \|z-x\|\leq r \bigr\}
e^{-r}dr
\\
& \leq c e^{-b\theta_{k_0}\rho_t \|x\|} \phi_{b(1-\theta
_{k_0})}(s)
\\
&\leq c \tilde{g}_t^{(k_0+1)} (x) \phi_{\kappa}(s)
F^{-1}(t)
\end{align*}
(cf. \eqref{conv10}), where in the last line we used the inequality
$\kappa<b(1-\theta_{k_0})$ and the monotonicity of $\phi_\nu$ in
$\nu$.
Using this estimate, we derive
%
\begin{align}
  p^{\lozenge(k_0+1)}_t(x,y)&\leq
C_{k_0} C_1 \int_0^t
\int_\rn\int_\rn \bigl(
\tilde{g}_{t-s}^{(k_0)} \sq\tilde {g}_{s}^{(1)}
\bigr) (y-x-w_1-w_2)\notag
\\
&\quad\cdot Q_{s} (dw_1) \mathcal{R}^{(k_0)}_{t-s}(dw_2)ds\notag
\\
&\leq2cC_1 C_{k_0} \cdot \bigl( \tilde{g}_t^{(k_0+1)}
* \mathcal {R}^{(k_0+1)}_t \bigr) (y-x).\label{ptk0}
\end{align}
%
Then \eqref{P1-ek} follows by induction. Indeed, assume that \eqref
{P1-ek} holds for $k=k_0+\ell-1$. For  $\ell\geq2$ we get
\[
\bigl(\tilde{g}_{t-s}^{(k_0+\ell-1)}\,\sq\,\tilde{g}_s^{(1)}
\bigr) (x)\leq cF^{\ell-1}(t) e^{-b\theta_{k_0}\rho_t \|x\|} \phi_\kappa(s)=c
F^{-1}(t) \tilde {g}_t^{(k_0+\ell)}(x)
\phi_\kappa(s).
\]
Therefore,
\begin{align}
 \big|p_t^{\lozenge(k_0+\ell)} (x,y)\big|&
\leq (2C_1 c )^{\ell-1} C_1 C_{k_0} \int
_0^t \int_\rn \bigl(
\tilde{g}_{t-s}^{(k_0+\ell-1)} * \mathcal {R}^{(k_0+\ell-1)}_{t-s}
\bigr) (z-x)\notag
\\
&\quad\cdot \bigl(\tilde{g}_s^{(1)}
*Q_{s} \bigr) (y-z)dzds\notag
\\
&= C_{k_0}(2C_1 c)^{\ell} \bigl(
\tilde{g}_t^{(k_0+\ell)}* \mathcal {R}_t^{(k_0+\ell)}
\bigr) (y-x). \label{e10}
\end{align}
\end{proof}
\begin{remark}
As we observed in the proof, the estimation procedure depends on
condition \textrm{H1}, which guarantees the existence of the number
$k_0$ such that \eqref{k00} holds. In general, without \textrm{H1} we
cannot guarantee the existence of such a number, which is crucial in
our approach. For example, suppose that $\rho_s\asymp s^{-1}$ for small
$s$, and take the measure $\varpi$ such that
\[
h(r)\asymp\frac{1}{\ln^2 r}, \quad r\in(0,1].
\]
By the Tauberian theorem, we have
$\hat{h}(\lambda) \asymp [\lambda\ln^2\lambda ]^{-1} $ for
large $\lambda$.
Therefore,
$\phi_\nu(t) \sim|\ln t|^{-1}$ as $t\to0$, and thus the integral
$F(t)$ diverges.
Nevertheless, assumption \textrm{H1} can be dropped, if the function
$p_t(x,y)$ possesses a more precise upper bound. We discuss this
question later in Section~\ref{Proo2}.
\end{remark}

\subsection{Proof of statement c)}

From \eqref{e10} we get for all $x,y\in\rn$,
%
\begin{equation}
\label{pl} \big|p_t^{\lozenge(k_0+\ell)} (x,y)\big|\leq M \bigl(C F(t)
\bigr)^\ell, \quad \ell\geq1,
\end{equation}
where $M= C_{k_0}$ and $C=2C_1 c$. Without loss of generality, assume
that $C\geq1$. Since $F(t)\to0$ as $t\to0$, there exists $t_0>0$,
such that
%
\begin{equation}
\label{CF} C F(t)<1/2, \quad t\in(0,t_0].
\end{equation}
Thus, for $t\in(0,t_0]$, the series \eqref{ser} converges absolutely
and is the solution to \eqref{duh}.\goodbreak

Let us show that the integral equation \eqref{duh} possesses a unique
solution in the class of functions $\{f(t,x,y)\geq0, \, t\in
(0,t_0],\, x,y\in\rn\}$, such that
%
\begin{equation}
\label{duh11} \int_\rn f(t,x,y)dy\leq c, \quad t
\in(0,t_0], \,x\in\rn.
\end{equation}
Then the series \eqref{ser} is a unique representation of the
Feynman--Kac kernel $p_t^A(x,y)$ for $t\in(0,t_0]$, $x,y\in\rn$.

Suppose that there are two solutions $p_t^{(1),A}(x,y)$ and
$p_t^{(2),A}(x,y)$ to \eqref{duh}.
Put $\tilde{p}_t^A (x,y):= |p_t^{(1),A}(x,y) -p_t^{(2),A}(x,y)|$ and
$v_t(x):= \int_\rn\tilde{p}_t^A (x,y)dy$.
Then, by \eqref{duh} we have
%
\begin{equation}
\label{ser4} v_t(x)\leq\int_0^t
\int_\rn p_{t-s}(x,z) v_s(z)
\varpi(dz)ds.
\end{equation}
By induction we get
%
\begin{equation}
\label{ser4} v_t(x)\leq\int_0^t
\int_\rn p_{t-s}^{\lozenge k}(x,z)
v_s(z)\varpi(dz)ds.
\end{equation}
Note that there exists $c>0$ such that $p^{\lozenge(k_0+1)}_t(x,y)\leq
c$ for all $t\in(0,t_0]$, $x,y\in\rn$ (cf. \eqref{ptk0}). In such a
way, by the finiteness of  measure $\varpi$, we get
%
\begin{equation}
\label{ser5} v_t(x)\leq c_1 \int_0^t
\int_\rn v_s(z)\varpi(dz)ds\leq
c_2 \int_0^t \tilde{v}_sds,
\end{equation}
where $\tilde{v}_s:= \sup_{z\in\rn}v_s(z)$. Taking $\sup_{x\in\rn}$
in the left-hand side of \eqref{ser5}, we derive
%
\begin{equation}
\label{ser6} \tilde{v}_t \leq c_2 \int
_0^t \tilde{v}_s ds, \quad t
\in(0,t_0].
\end{equation}
Applying the Gronwall--Bellman lemma, we derive $\tilde{v}_t\equiv0$
for all $t\in(0,t_0]$. Thus, the solution to \eqref{duh} is unique in
the class of functions
\[
\bigl\{f(t,x,y)\geq0, \, t\in(0,t_0],\, x,y\in\rn\bigr\}
\]
satisfying \eqref{duh11}.

\medskip

Estimating series \eqref{ser} from above, we get an upper bound in
\eqref{pa10} with $f_{\mathrm{up}}$ of the form \eqref{flu1} and
\[
\mathcal{R}_t(dw)= c_0 \sum
_{k\geq1} c^k \mathcal{R}_t^{(k)}(dw),
\]
with some $c\in(0,1)$ and the normalizing constant $c_0>0$ chosen so
that $\mathcal{R}_t(\rn)\leq1$ for all $t\in(0,t_0]$.

For the lower bound, observe that by \eqref{gtk-new} we have
%
\begin{equation}
\label{pk} \big|p_t^{\lozenge k} (x,y)\big|\leq C(k_0)
\rho_t^n F(t), \quad2\leq k\leq k_0.
\end{equation}
By \eqref{pl} and \eqref{CF} we get
\[
\sum_{\ell\geq1} p_t^{\lozenge(k_0+\ell)}(x,y)
\leq2 M C F(t),\quad t\in(0,t_0],
\]
which, together with \eqref{pk} and the observation that $\rho_t$ is
decreasing, yields the estimate
%
\begin{equation}
\label{p0psi} \bigg| \sum_{k=2}^\infty
p_t^{\lozenge k} (x,y) \bigg|\leq C_0 F(t) \rho
_t^n, \quad t\in(0, t_0],
\end{equation}
where $C_0>0$ is some constant.
Therefore, choosing $t_0$ small enough, we have by the lower bound in
\eqref{upper-es} the inequalities
%
\begin{align}
 p^A_t(x,y) &\geq\rho_t^n
f_{\mathrm{low}}\bigl((y-x)\rho_t\bigr)- C_0F(t)
\rho_t^n\notag\\
&\geq c \rho_t^n
f_{\mathrm{low}}\bigl((y-x)\rho_t\bigr), \quad t
\in(0,t_0].\label{pa-low}
\end{align}
\qed

\section{Proof of Proposition~\ref{pr1}}\label{Proo2}

Since the proof of the proposition follows with minor changes from the
proof of the upper estimate in \cite[Thm.~3.3]{So06}, we only sketch
the argument.
For $(t,x,y)\in(0,t_0]\times\rn\times\rn$, put
\[
I_0(t,x,y):= \mathfrak{g}_t(x-y),\quad I_k
(t,x,y)= \int_0^t \int_\rn
\mathfrak{g}_{t-s}(x-z) I_{k-1}(s,z,y)\varpi(dz)ds.
\]
By the same argument as in \cite{So06}, we can get
\[
\big|I_k(t,x,y) \big|\leq c^k \mathfrak{g}_t(y-x),
\quad k\geq1,\ t\in(0,t_0],
\]
where $c\in(0,1)$ is some constant. Thus, for $k\geq1$, we have
%
\begin{equation}
\label{Pkg} \big|p_t^{\lozenge k} (x,y) \big|\leq c^k
\mathfrak{g}_t(x-y),\quad x,y\in\rn, \ t\in(0,t_0].
\end{equation}
This proves the convergence of the series \eqref{ser} and the upper
estimate \eqref{pr-eq1}.\qed

\begin{remark}
Let us briefly discuss the crucial difference between the proofs of
Theorem~\ref{T-FK1} and Proposition~\ref{pr1}. We changed the procedure
of estimation of $p_t^{\lozenge k}(x,y)$ after a certain step, which
was possible due to \eqref{H1}.
In the case when we have a single-kernel estimate for $p_t(x,y)$, for
example, \eqref{gam1}, we can drop condition \eqref{H1}. In fact, it is
enough to require that $\varpi\in S_K$ with respect to $\mathfrak
{g}_t(y-x)$. This happens because in the case of the single-kernel
estimate of type \eqref{gam1}, it is possible to show that the
convolutions $p_t^{\lozenge k} (x,y)$ satisfy the upper bound \eqref
{Pkg} with $c\in(0,1)$, which implies the convergence of the series
\eqref{ser}.
\end{remark}

\section{Examples}\label{exam}

As one might observe, the scope of applicability of Theorem~\ref{T-FK1}
heavily relies on the properties of the initial process $X$. To assure
the existence of such a process, we applied Theorem~\ref{t-main}. Below we give the  examplesin which  condition \textrm{A1} is satisfied. Since
conditions \textrm{A2}--\textrm{A4} are easy to check, we may assume
that the functions $a(x)$ and $m(x,u)$ are appropriate. We confine
ourselves to the case when the measure $\mu$ in the generator of $X$
is ``discretized $\alpha$-stable; up to the author's knowledge, in this
case the corresponding Feynman--Kac semigroup was not investigated.
Examples  below  illustrate that our approach is applicable also in the situation when the  ``L\'evy-type measure'' $m(x,u)\mu(du)$ related to the
initial process $X$ is not absolutely continuous with respect to the
Lebesgue measure.

%
\begin{example}
a) Consider a ``discretized version'' of an $\alpha$-stable L\'evy
measure in $\rn$.
Let $m_{k,\upsilon}(dy)$ be the uniform distribution on a sphere
$\mathbb{S}_{k,\upsilon}$ centered at~$0$ with radius $2^{-k\upsilon}$,
$\upsilon>0$, $k\in\mathbb{Z}$.
Consider the L\'evy measure
%
\begin{equation}
\label{mu1} \mu(dy)=\sum_{k=-\infty}^\infty2^{k\gamma}m_{k,v}(dy),
\end{equation}
where $0<\gamma<2\upsilon$. In {\rm\cite{K13}}, it is shown that for
such a L\'evy measure condition \textrm{A1} is satisfied, and
%
\begin{equation}
\label{rho-al} \rho_t\asymp t^{-1/\alpha}, \quad t\in(0,1],
\end{equation}
where $\alpha=\gamma/\upsilon$.

Take some functions $a(\cdot): \rn\to\real$ and a non-negative bounded
function $m(\cdot,\cdot)$ defined on $ \rn\times\rn$ satisfying
assumptions \textrm{A2}--\textrm{A4}. By Theorem~\ref{t-main} the
operator of the form \eqref{lxd0} with $\mu$, $a(x)$, and $m(x,u)$ as
before can be extended to the generator of a Feller process $X$ that
admits the transition density $p_t(x,y)$ satisfying \eqref{upper-es}.

Let $\varpi$ be a finite Borel measure, and let $h$ be its volume
function, see \eqref{h}. Let us show that if the inequality
%
\begin{equation}
\label{H2} \int_0^t \frac{h(v)}{v^{n+1-\alpha}}dv
\leq c_1 t^\zeta,\quad t\in(0,1],
\end{equation}
for some $\zeta>0$, then we have \eqref{H1}. Using \eqref{rho-al},
changing variables, and applying the Fubini theorem, we derive
\begin{align*}
\int_0^t \rho_s^{n+1}
\hat{h}(\rho_s)ds&\leq \int_0^t
s^{-\frac{n+1}{\alpha}} \hat{h}\bigl(c_2s^{-1/\alpha}\bigr) ds
\\
&= \alpha\int_0^\infty
\Biggl[ \int_0^{t^{1/\alpha} v} \frac
{h(\tau
)}{\tau^{n+1-\alpha}}d\tau
\Biggr] v^{n-\alpha} e^{-c_2v}dv.
\end{align*}
Denote by $I(t)$ the right-hand side in this expression. Applying
\eqref
{H2}, we get
\begin{align*}
I(t)\leq c_1 \int_0^\infty
\bigl(t^{1/\alpha} v\bigr)^\zeta v^{n-\alpha} e^{-c_2v}dv
\leq c_3 t^{\zeta/\alpha}.
\end{align*}
In particular, if $h(v)\leq c v^d$, $d>n-\alpha$, then \eqref{H2} holds.

Thus, by Theorem~\ref{T-FK1}, the Feynman--Kac semigroup
$(T_t^A)_{t\geq0}$ is well defined, and the kernel $p^A_t(x,y)$
satisfies \eqref{pa10} with some constants $a_i$, $1\leq i\leq4$, and
some family of sub-probability measures $(\mathcal{R}^{(k)})_{t\geq0}$.

b) Consider now the one-dimensional situation. In this case, the L\'evy
measure $\mu$ from \eqref{mu1} is just
%
\begin{equation}
\label{mu2} \mu(dy)=\sum_{n=-\infty}^\infty2^{n\gamma}
\bigl(\delta _{2^{-n\upsilon
}}(dy)+\delta_{-2^{-n\upsilon}}(dy) \bigr).
\end{equation}
Let $X$ be a L\'evy process with characteristic exponent
\[
\psi(\xi):= \int_\rn\bigl(1-\cos(\xi u)\bigr)\mu(du).
\]
In {\rm\cite{KK12a}} we show that if $1<\alpha<2$, then the transition probability
density $p_t(x,y)$ of $X$, $X_0=x$, is continuous in $(t,x,y)\in
(0,\infty)\times\real\times\real$ and admits the following upper bound:
%
\begin{equation}
\label{cantor1} p_t(x,y) \leq c t^{-1/\alpha} \bigl(1+
|y-x|/t^{1/\alpha} \bigr)^{-\alpha}, \quad t\in(0,1], \ x,y\in
\real.
\end{equation}
Note that the right-hand side of \eqref{cantor1} is of the form \eqref
{gtx} with $d=0$. Thus, the conditions of Proposition~\ref{pr1} are
satisfied, and we can construct the Feynman--Kac semigroup for the
related functional $A_t$ and the transition density $p_t(x,y)$, and get
the upper bound for the function $p^A_t(x,y)$ with $\rho_t\asymp
t^{-1/\alpha}$, $t\in(0,1]$.

To end this example, we remark that it is still possible to construct
the upper bound for such $p_t(x,y)$ for $\alpha\in(0,1)$ of the form
$t^{-n/\alpha} f(xt^{-1/\alpha})$, but the function $f$ in this upper
bound might not be integrable; see {\rm\cite{KK12a}} for details. Note
that the upper bound \eqref{cantor1} is non-integrable in $\rn$ for
$n\geq2 $.
\end{example}

\begin{example}
Consider the L\'evy measure
%
\begin{equation}
\label{e:mn} \nu_0(A) = \int_{\mathbb{R}^n} \int
_0^\infty\I_{A}(rv) r^{-1-\alpha}
\,dr \mu_0(dv)\,,\quad\alpha\in(0,2),
\end{equation}
where $\alpha\in(0,2)$, $\mu_0$ is a finite symmetric non-degenerate
$($that is, not concentrated on a linear subspace of $\rn)$ measure on
the unit sphere $\mathbb{S}^n$ in $\rn$. Suppose that
there exists $d>0$ such that for small $r$ we have
\[
\nu_0\bigl(B(x,r)\bigr)\leq C r^d, \quad\|x\|=1.
\]
For $d+\alpha>n$, it is shown in {\rm\cite{BS07}} that the
corresponding L\'evy process $X$, $X_0=x$, admits the transition probability
density $p_t(x,y)$, which satisfies
%
\begin{equation}
\label{cantor20} p_t(x,y)\leq c t^{-n/\alpha} \bigl( 1+\|y-x\|
t^{-1/\alpha} \bigr)^{-d-\alpha}, \quad t>0, \ x,y\in\rn.
\end{equation}
In the forthcoming paper
{\rm\cite{BKS14}}, we construct a class of L\'evy-type processes that
admit the transition densities bounded from above by the left-hand side
of \eqref{cantor20}.
Thus, taking $\varpi\in\mathcal{K}_{n,\alpha}$, we may apply
Proposition~\ref{pr1}.
\end{example}

\section*{Acknowledgments} The author thanks Alexei Kulik and Niels
Jacob for valuable discussions and comments, and the anonymous referee
for helpful remarks and suggestions. The DFG Grant Schi~419/8-1 and the
Scholarship of the President of Ukraine for young scientists
(2012--2014) are gratefully acknowledged.

\appendix

\section{Appendix}\label{appendix}

\begin{proof}[Proof of Lemma~\ref{ka2}]
We follow the idea of the proof of \cite[Lemma~11]{BJ07}.
Without loss of generality, assume that $\varpi$ is non-negative.
Suppose first that $\varpi\in\mathcal{K}_{n,\alpha}$. Using
integration by parts and the Fubini theorem, we get
%
\begin{align}
 &\int_0^t
\int_\rn\mathfrak{g}_s (x-y) \varpi(dy)ds\notag\\
&\quad\asymp\int_0^t \int_\rn
s^{-n/\alpha} \bigl( 1\wedge\bigl(s^{1/\alpha
}/\| x-y\|\bigr)
\bigr)^{\alpha+d}\varpi(dy)ds\notag
\\
&\quad= \int_0^t s^{-n/\alpha} \varpi\bigl\{ y:
\, \|x-y\|\leq s^{1/\alpha}\bigr\} ds\notag
\\
&\qquad+ \int_0^t s^{-n/\alpha} \int
_{\|x-y\|>s^{1/\alpha}} \biggl( \frac
{s^{1/\alpha}}{\|x-y\|} \biggr)^{d+\alpha}
\varpi(dy)ds\notag
\\
&\quad=\alpha \biggl( 1+ \frac{d+\alpha}{d+2\alpha-n} \biggr) \int_0^{t^{1/\alpha
}}
\frac{\varpi\{y:\, \|x-y\|\leq v\}}{v^{n+1-\alpha}}dv\notag
\\
&\qquad+ \frac{\alpha(d+\alpha)}{d+2\alpha-n} t^{\frac{d+2\alpha
-n}{\alpha}} \int_{t^{1/\alpha}}^\infty
\frac{\varpi\{ y:\, \|x-y\|
<v\} }{v^{d+1+\alpha}}dv.\label{ka-eq1}
\end{align}
Since $\varpi\in\mathcal{K}_{n,\alpha}$, the first term tends to 0 as
$t\to0$. Further, since $d>n-\alpha$ and the measure $\varpi$ is
finite, we have
\[
t^{\frac{d+2\alpha-n}{\alpha}} \sup_x \int_{1}^\infty
\frac
{\varpi\{
y:\, \|x-y\|<v\} }{v^{d+1+\alpha}}dv\to0, \quad t\to0.
\]
Let us show that
%
\begin{equation}
\label{low-lim} \sup_x t^{\frac{d+2\alpha-n}{\alpha}} \int
_{t^{1/\alpha}}^1 \frac
{\varpi\{ y:\, \|x-y\|<v\} }{v^{d+1+\alpha}}dv.
\end{equation}
Let $K_0\equiv K_0(t):= [t^{-1/\alpha}]+1$; note that
$K_0(t)t^{1/\alpha
}\to1$ as $t\to0$. We have
\begin{align*}
&t^{\frac{d-\epsilon+2\alpha-n}{\alpha}} \int_{t^{1/\alpha}}^1
\frac
{\varpi\{ y:\, \|x-y\|<v\} }{v^{d+1+\alpha}}dv
\\
&\quad\leq\sum_{k=1}^{K_0} \biggl(
\frac{1}{k} \biggr)^{(d-n+2\alpha)/\alpha} \int_{kt^{1/\alpha}}^{(k+1)t^{1/\alpha}}
\frac{\varpi\{ y:\, \|
x-y\|
<v\} }{v^{n+1-\alpha}}dv.
\end{align*}
Since $d>n-\alpha$, we have
$\sum_{k=1}^\infty k^{-(d-n+2\alpha)/\alpha}<\infty$.
Since $\varpi\in\mathcal{K}_{n,\alpha}$, we have
\begin{align*}
\max_{1\leq k\leq K_0(t)} & \sup_x\int
_{kt^{1/\alpha
}}^{(k+1)t^{1/\alpha}} \frac{\varpi\{ y:\, \|x-y\|<v\}
}{v^{n+1-\alpha
}}dv\longrightarrow0,
\quad t\to0.
\end{align*}
Thus, we arrive at \eqref{low-lim}.
This proves that \eqref{kato10} implies that $\varpi\in S_K$ with
respect to $\mathfrak{g}_t(y-x)$.

The converse is straightforward.
\end{proof}


\begin{thebibliography}{99}

\bibitem{AM91}
%
\begin{barticle}
\bauthor{\bsnm{Albeverio}, \binits{S.}},
\bauthor{\bsnm{Ma}, \binits{Z.-M.}}:
\batitle{Perturbation of Dirichlet forms---lower semiboundedness, closability,
and form cores}.
\bjtitle{J. Funct. Anal.}
\bvolume{99},
\bfpage{332}--\blpage{356}
(\byear{1991}).
\bid{doi={10.1016/\break 0022-1236(91)90044-6}, mr={1121617}}
\end{barticle}
%
%
\OrigBibText
%
\begin{barticle}
\bauthor{\bsnm{Albeverio}, \binits{S.}},
\bauthor{\bsnm{Ma}, \binits{Z.-M.}}:
\batitle{Perturbation of Dirichlet forms---Lower semiboundedness, closability,
and form cores}.
\bjtitle{J. Funct. Anal.}
\bvolume{99},
\bfpage{332}--\blpage{356}
(\byear{1991})
\end{barticle}
%
\endOrigBibText
\bptok{structpyb}%
\endbibitem

\bibitem{AM92}
%
\begin{barticle}
\bauthor{\bsnm{Albeverio}, \binits{S.}},
\bauthor{\bsnm{Ma}, \binits{Z.-M.}}:
\batitle{Additive functionals, nowhere Radon and Kato class smooth measures
associated with Dirichlet forms}.
\bjtitle{Osaka J. Math.}
\bvolume{29},
\bfpage{247}--\blpage{265}
(\byear{1992}).
\bid{mr={1173989}}
\end{barticle}
%
%
\OrigBibText
%
\begin{barticle}
\bauthor{\bsnm{Albeverio}, \binits{S.}},
\bauthor{\bsnm{Ma}, \binits{Z.-M.}}:
\batitle{Additive functionals, nowhere Radon and Kato class smooth measures
associated with Dirichlet forms}.
\bjtitle{Osaka J. Math.}
\bvolume{29},
\bfpage{247}--\blpage{265}
(\byear{1992})
\end{barticle}
%
\endOrigBibText
\bptok{structpyb}%
\endbibitem

\bibitem{BM90}
%
\begin{bchapter}
\bauthor{\bsnm{Blanchard}, \binits{P.}},
\bauthor{\bsnm{Ma}, \binits{Z.-M.}}:
\bctitle{Semigroup of Schr\"odinger operators with potentials given by Radon
measures}.
In: \bbtitle{Stochastic Processes, Physics and Geometry}.
\bsertitle{L'Ecuyer, Pierre and Owen, Art B}.
\bpublisher{World Sci. Publishing},
\blocation{Teaneck, NJ}
(\byear{1990}).
\bid{mr={1124210}}
\end{bchapter}
%
%
\OrigBibText
%
\begin{bchapter}
\bauthor{\bsnm{Blanchard}, \binits{P.}},
\bauthor{\bsnm{Ma}, \binits{Z.-M.}}:
\bctitle{Semigroup of Schr\"odinger operators with potentials given by Radon
measures}.
In: \bbtitle{Stochastic Processes, Physics and Geometry}.
\bsertitle{L'Ecuyer, Pierre and Owen, Art B}.
\bpublisher{World Sci. Publishing},
\blocation{Teaneck, NJ}
(\byear{1990})
\end{bchapter}
%
\endOrigBibText
\bptok{structpyb}%
\endbibitem

\bibitem{BJ07}
%
\begin{barticle}
\bauthor{\bsnm{Bogdan}, \binits{K.}},
\bauthor{\bsnm{Jakubowski}, \binits{T.}}:
\batitle{Estimates of heat kernel of fractional Laplacian perturbed by gradient
operators}.
\bjtitle{Commun. Math. Phys.}
\bvolume{271}(\bissue{1}),
\bfpage{179}--\blpage{198}
(\byear{2007}).
\bid{doi={10.1007/s00220-006-0178-y}, mr={2283957}}
\end{barticle}
%
%
\OrigBibText
%
\begin{barticle}
\bauthor{\bsnm{Bogdan}, \binits{K.}},
\bauthor{\bsnm{Jakubowski}, \binits{T.}}:
\batitle{Estimates of heat kernel of fractional Laplacian perturbed by gradient
operators}.
\bjtitle{Comm. Math. Phys.}
\bvolume{271}(1),
\bfpage{179}--\blpage{198}
(\byear{2007})
\end{barticle}
%
\endOrigBibText
\bptok{structpyb}%
\endbibitem

\bibitem{BS13}
%
\begin{botherref}
\oauthor{\bsnm{Bogdan}, \binits{K.}},
\oauthor{\bsnm{Szczypkowski}, \binits{K.}}:
Gaussian estimates for Schroedinger perturbations.
Available at
\href{http://arxiv.org/pdf/1301.4627.pdf}{http://arxiv.org/pdf/1301.4627.pdf}
\end{botherref}
%
%
\OrigBibText
%
\begin{botherref}
\oauthor{\bsnm{Bogdan}, \binits{K.}},
\oauthor{\bsnm{Szczypkowski}, \binits{K.}}:
Gaussian estimates for Schroedinger perturbations.
Available at
\href{http://arxiv.org/pdf/1301.4627.pdf}{http://arxiv.org/pdf/1301.4627.pdf}
\end{botherref}
%
\endOrigBibText
\bptok{structpyb}%
\endbibitem

\bibitem{BS07}
%
\begin{barticle}
\bauthor{\bsnm{Bogdan}, \binits{K.}},
\bauthor{\bsnm{Sztonyk}, \binits{P.}}:
\batitle{Estimates of the potential kernel and Harnack's inequality for the
anisotropic fractional Laplacian}.
\bjtitle{Stud. Math.}
\bvolume{181(2)},
\bfpage{101}--\blpage{123}
(\byear{2007}).
\bid{doi={10.4064/sm181-2-1}, mr={2320691}}
\end{barticle}
%
%
\OrigBibText
%
\begin{barticle}
\bauthor{\bsnm{Bogdan}, \binits{K.}},
\bauthor{\bsnm{Sztonyk}, \binits{P.}}:
\batitle{Estimats of the potential kernel and Harnack's inequality for the
anisotropic fractional Laplacian}.
\bjtitle{Studia Math.}
\bvolume{181(\bissue{2})},
\bfpage{101}--\blpage{123}
(\byear{2007})
\end{barticle}
%
\endOrigBibText
\bptok{structpyb}%
\endbibitem

\bibitem{BKS14}
%
\begin{botherref}
\oauthor{\bsnm{Bogdan}, \binits{K.}},
\oauthor{\bsnm{Sztonyk}, \binits{P.}},
\oauthor{\bsnm{Knopova}, \binits{V.}}:
Transition densities of parastable Markov processes.
In preparation
\end{botherref}
%
%
\OrigBibText
%
\begin{botherref}
\oauthor{\bsnm{Bogdan}, \binits{K.}},
\oauthor{\bsnm{Sztonyk}, \binits{P.}},
\oauthor{\bsnm{Knopova}, \binits{V.}}:
Transition densities of parastable Markov processes.
In preparation.
\end{botherref}
%
\endOrigBibText
\bptok{structpyb}%
\endbibitem

\bibitem{CB11}
%
\begin{barticle}
\bauthor{\bsnm{Chaumont}, \binits{L.}},
\bauthor{\bsnm{Bravo}, \binits{G.U.}}:
\batitle{Markovian bridges: Weak continuity and pathwise
constructions}.
\bjtitle{Ann. Probab.}
\bvolume{39}(2),
\bfpage{609}--\blpage{647}
(\byear{2011}).
\bid{doi={10.1214/10-AOP562}, mr={2789508}}
\end{barticle}
%
%
\OrigBibText
%
\begin{barticle}
\bauthor{\bsnm{Chaumont}, \binits{L.}},
\bauthor{\bsnm{Bravo}, \binits{G.U.}}:
\batitle{Markovian bridges: Weak continuity and pathwise constructions.}
\bjtitle{Ann. Probab.}
\bvolume{39}(2),
\bfpage{609}--\blpage{647}
(\byear{2011})
\end{barticle}
%
\endOrigBibText
\bptok{structpyb}%
\endbibitem

\bibitem{CS03}
%
\begin{barticle}
\bauthor{\bsnm{Chen}, \binits{Z.-Q.}},
\bauthor{\bsnm{Song}, \binits{R.}}:
\batitle{Conditional gauge theorem for non-local Feynman--Kac
transforms}.
\bjtitle{Probab. Theory Relat. Fields}
\bvolume{125},
\bfpage{45}--\blpage{72}
(\byear{2003}).
\bid{doi={10.1007/\break s004400200219}, mr={1952456}}
\end{barticle}
%
%
\OrigBibText
%
\begin{barticle}
\bauthor{\bsnm{Chen}, \binits{Z.-Q.}},
\bauthor{\bsnm{Song}, \binits{R.}}:
\batitle{Conditional gauge theorem for non-local Feynman--Kac transforms.}
\bjtitle{Probab. Th. Rel. Fields.}
\bvolume{125},
\bfpage{45}--\blpage{72}
(\byear{2003})
\end{barticle}
%
\endOrigBibText
\bptok{structpyb}%
\endbibitem

\bibitem{CKS13}
%
\begin{barticle}
\bauthor{\bsnm{Chen}, \binits{Z.-Q.}},
\bauthor{\bsnm{Kim}, \binits{P.}},
\bauthor{\bsnm{Song}, \binits{R.}}:
\batitle{Stability of Dirichlet heat kernel estimates for non-local operators
under Feynman--Kac perturbation}.
\bjtitle{Trans. Am. Math. Soc.}
\bvolume{367}(7),
\bfpage{5237}--\blpage{5270}
(\byear{2015}).
\bid{doi={10.1090/S0002-9947-2014-06190-4}, mr={3335416}}
\end{barticle}
%
%
\OrigBibText
%
\begin{barticle}
\bauthor{\bsnm{Chen}, \binits{Z.-Q.}},
\bauthor{\bsnm{Kim}, \binits{P.}},
\bauthor{\bsnm{Song}, \binits{R.}}:
\batitle{Stability of Dirichlet heat kernel estimates for non-local operators
under Feynman--Kac perturbation.}
\bjtitle{Trans. Am. Math. Soc.}
\bvolume{367}(7),
\bfpage{5237}--\blpage{5270}
(\byear{2015})
\end{barticle}
%
\endOrigBibText
\bptok{structpyb}%
\endbibitem

\bibitem{CR88}
%
\begin{barticle}
\bauthor{\bsnm{Chung}, \binits{K.L.}},
\bauthor{\bsnm{Rao}, \binits{M.}}:
\batitle{General gauge theorem for multiplicative functionals}.
\bjtitle{Trans. Am. Math. Soc.}
\bvolume{306},
\bfpage{819}--\blpage{836}
(\byear{1988}).
\bid{doi={10.2307/2000825}, mr={0933320}}
\end{barticle}
%
%
\OrigBibText
%
\begin{barticle}
\bauthor{\bsnm{Chung}, \binits{K.L.}},
\bauthor{\bsnm{Rao}, \binits{M.}}:
\batitle{General gauge theorem for multiplicative functionals.}
\bjtitle{Trans. Am. Math. Soc.}
\bvolume{306},
\bfpage{819}--\blpage{836}
(\byear{1988})
\end{barticle}
%
\endOrigBibText
\bptok{structpyb}%
\endbibitem

\bibitem{CZ95}
%
\begin{bbook}
\bauthor{\bsnm{Chung}, \binits{K.L.}},
\bauthor{\bsnm{Zhao}, \binits{Z.}}:
\bbtitle{From Brownian Motion to Schr\"odinger's Equation}.
\bpublisher{Springer},
\blocation{Berlin}
(\byear{1995}).
\bid{doi={10.1007/978-3-642-57856-4}, mr={1329992}}
\end{bbook}
%
%
\OrigBibText
%
\begin{bbook}
\bauthor{\bsnm{Chung}, \binits{K.L.}},
\bauthor{\bsnm{Zhao}, \binits{Z.}}:
\bbtitle{From Brownian Motion to Schr\"odinger's Equation}.
\bpublisher{Springer},
\blocation{Berlin}
(\byear{1995})
\end{bbook}
%
\endOrigBibText
\bptok{structpyb}%
\endbibitem

\bibitem{Dy65}
%
\begin{bbook}
\bauthor{\bsnm{Dynkin}, \binits{E.B.}}:
\bbtitle{Markov Processes, vols. 1--2}.
\bpublisher{Springer},
\blocation{Berlin}
(\byear{1965}).
\bid{doi={10.1007/978-1-4614-6240-8\_1}, mr={3111220}}
\end{bbook}
%
%
\OrigBibText
%
\begin{bbook}
\bauthor{\bsnm{Dynkin}, \binits{E.B.}}:
\bbtitle{Markov Processes. Vols. 1--2}.
\bpublisher{Springer},
\blocation{Berlin}
(\byear{1965})
\end{bbook}
%
\endOrigBibText
\bptok{structpyb}%
\endbibitem

\bibitem{Fu80}
%
\begin{bbook}
\bauthor{\bsnm{Fukushima}, \binits{M.}}:
\bbtitle{Dirichlet Forms and Markov Processes}.
\bpublisher{North-Holland Publishing Company},
\blocation{Amsterdam}
(\byear{1980}).
\bid{mr={0569058}}
\end{bbook}
%
%
\OrigBibText
%
\begin{bbook}
\bauthor{\bsnm{Fukushima}, \binits{M.}}:
\bbtitle{Dirichlet Forms and Markov Processes}.
\bpublisher{North-Holland Publishing company},
\blocation{Amsterdam}
(\byear{1980})
\end{bbook}
%
\endOrigBibText
\bptok{structpyb}%
\endbibitem

\bibitem{FOT94}
%
\begin{bbook}
\bauthor{\bsnm{Fukushima}, \binits{M.}},
\bauthor{\bsnm{Oshima}, \binits{Y.}},
\bauthor{\bsnm{Takeda}, \binits{M.}}:
\bbtitle{Dirichlet Forms and Symmetric Markov Processes}.
\bpublisher{Walter de Gruyter $\&$ Co.},
\blocation{Berlin}
(\byear{1994}).
\bid{doi={10.1515/\break 9783110889741}, mr={1303354}}
\end{bbook}
%
%
\OrigBibText
%
\begin{bbook}
\bauthor{\bsnm{Fukushima}, \binits{M.}},
\bauthor{\bsnm{Oshima}, \binits{Y.}},
\bauthor{\bsnm{Takeda}, \binits{M.}}:
\bbtitle{Dirichlet Forms and Symmetric Markov Processes}.
\bpublisher{Walter de Gruyter $\&$ Co.},
\blocation{Berlin}
(\byear{1994})
\end{bbook}
%
\endOrigBibText
\bptok{structpyb}%
\endbibitem

\bibitem{Ja01}
%
\begin{bbook}
\bauthor{\bsnm{Jacob}, \binits{N.}}:
\bbtitle{Pseudo Differential Operators and Markov Processes, I: Fourier
Analysis and Semigroups}.
\bpublisher{Imperial College Press},
\blocation{London}
(\byear{2001}).
\bid{doi={10.1142/\break9781860949746}, mr={1873235}}
\end{bbook}
%
%
\OrigBibText
%
\begin{bbook}
\bauthor{\bsnm{Jacob}, \binits{N.}}:
\bbtitle{Pseudo Differential Operators and Markov Processes, I: Fourier
Analysis and Semigroups}.
\bpublisher{Imperial College Press},
\blocation{London}
(\byear{2001})
\end{bbook}
%
\endOrigBibText
\bptok{structpyb}%
\endbibitem

\bibitem{K13}
%
\begin{barticle}
\bauthor{\bsnm{Knopova}, \binits{V.}}:
\batitle{Compound kernel estimates for the transition probability
density of a L\'evy process in $\rn$}.
\bjtitle{Theory Probab. Math. Stat.}
\bvolume{89},
\bfpage{57}--\blpage{70}
(\byear{2014}).
\bid{doi={10.1090/s0094-9000-2015-00935-2}, mr={3235175}}
\end{barticle}
%
%
\OrigBibText
%
\begin{barticle}
\bauthor{\bsnm{Knopova}, \binits{V.}}:
\batitle{Compound kernel estimates for the transition probability
density of a
L\'evy process in $\rn$.}
\bjtitle{Theor. Probab. Math. Stat.}
\bvolume{89},
\bfpage{57}--\blpage{70}
(\byear{2014})
\end{barticle}
%
\endOrigBibText
\bptok{structpyb}%
\endbibitem

\bibitem{KK13a}
%
\begin{botherref}
\oauthor{\bsnm{Knopova}, \binits{V.}},
\oauthor{\bsnm{Kulik}, \binits{A.}}:
Intrinsic compound kernel estimates for the transition probability
density of a L\'evy type processes and their applications.
Available at
\href{http://arxiv.org/abs/1308.0310}{http://arxiv.org/abs/1308.0310}
\end{botherref}
%
%
\OrigBibText
%
\begin{botherref}
\oauthor{\bsnm{Knopova}, \binits{V.}},
\oauthor{\bsnm{Kulik}, \binits{A.}}:
Intrinsic compound kernel estimates for the transition probability
density of a
L\'evy type processes and their applications.
Available at
\href{http://arxiv.org/abs/1308.0310}{http://arxiv.org/abs/1308.0310}
\end{botherref}
%
\endOrigBibText
\bptok{structpyb}%
\endbibitem

\bibitem{KK15}
%
\begin{botherref}
\oauthor{\bsnm{Knopova}, \binits{V.}},
\oauthor{\bsnm{Kulik}, \binits{A.}}:
Parametrix construction for certain L\'evy-type processes.
Random Oper. Stoch. Equ., in press.
\end{botherref}
%
%
\OrigBibText
%
\begin{botherref}
\oauthor{\bsnm{Knopova}, \binits{V.}},
\oauthor{\bsnm{Kulik}, \binits{A.}}:
Parametrix construction for certain L\'evy-type processes.
To appear in Rand. Op. Stoch. Eq. DOI:
\href
{http://www.degruyter.com/view/j/rose.ahead-of-print/rose-2014-0032/rose-2014-0032.xml}{http://www.degruyter.com/view/j/rose.ahead-of-print/rose-2014-0032/rose-2014-0032.xml}
\end{botherref}
%
\endOrigBibText
\bptok{structpyb}%
\endbibitem

\bibitem{KK12a}
%
\begin{barticle}
\bauthor{\bsnm{Knopova}, \binits{V.}},
\bauthor{\bsnm{Kulik}, \binits{A.}}:
\batitle{Intrinsic small time estimates for distribution densities of
L\'evy processes}.
\bjtitle{Random Oper. Stoch. Equ.}
\bvolume{21}(\bissue{4}),
\bfpage{321}--\blpage{344}
(\byear{2013}).
\bid{doi={10.1515/rose-2013-0015}, mr={3139314}}
\end{barticle}
%
%
\OrigBibText
%
\begin{barticle}
\bauthor{\bsnm{Knopova}, \binits{V.}},
\bauthor{\bsnm{Kulik}, \binits{A.}}:
\batitle{Intrinsic small time estimates for distribution densities of
L\'evy
processes.}
\bjtitle{Random Oper. Stoch. Eq.}
\bvolume{21(4)},
\bfpage{321}--\blpage{344}
(\byear{2013})
\end{barticle}
%
\endOrigBibText
\bptok{structpyb}%
\endbibitem

\bibitem{MR92}
%
\begin{bbook}
\bauthor{\bsnm{Ma}, \binits{Z.M.}},
\bauthor{\bsnm{Roekner}, \binits{M.}}:
\bbtitle{Introduction to the Theory of (Non-symmetric) Dirichlet Forms}.
\bpublisher{Springer},
\blocation{Berlin}
(\byear{1992})
\end{bbook}
%
%
\OrigBibText
%
\begin{bbook}
\bauthor{\bsnm{Ma}, \binits{Z.M.}},
\bauthor{\bsnm{Roekner}, \binits{M.}}:
\bbtitle{Introduction to the Theory of (Non-Symmetric) Dirichlet Forms}.
\bpublisher{Springer},
\blocation{Berlin}
(\byear{1992})
\end{bbook}
%
\endOrigBibText
\bptok{structpyb}%
\endbibitem

\bibitem{So06}
%
\begin{barticle}
\bauthor{\bsnm{Song}, \binits{R.}}:
\batitle{Two-sided estimates on the density of the Feynman--Kac
semigroups of stable-like processes}.
\bjtitle{Electron. J. Probab.}
\bvolume{7},
\bfpage{146}--\blpage{161}
(\byear{2006}).
\bid{doi={10.1214/\break EJP.v11-308}, mr={2217813}}
\end{barticle}
%
%
\OrigBibText
%
\begin{barticle}
\bauthor{\bsnm{Song}, \binits{R.}}:
\batitle{Two-sided estimates on the density of the Feynman--Kac
semigroups of
stable-like processes.}
\bjtitle{Elec. J. Probab.}
\bvolume{7},
\bfpage{146}--\blpage{161}
(\byear{2006})
\end{barticle}
%
\endOrigBibText
\bptok{structpyb}%
\endbibitem

\bibitem{Sz98}
%
\begin{bbook}
\bauthor{\bsnm{Sznitman}, \binits{A.-Z.}}:
\bbtitle{Brownian Motion, Obstacles and Random Media}.
\bpublisher{Springer},
\blocation{Berlin}
(\byear{1998}).
\bid{doi={10.1007/978-3-662-11281-6}, mr={1717054}}
\end{bbook}
%
%
\OrigBibText
%
\begin{bbook}
\bauthor{\bsnm{Sznitman}, \binits{A.-Z.}}:
\bbtitle{Brownian Motion, Obstacles and Random Media}.
\bpublisher{Springer},
\blocation{Berlin}
(\byear{1998})
\end{bbook}
%
\endOrigBibText
\bptok{structpyb}%
\endbibitem

\bibitem{CD00}
%
\begin{bbook}
\bauthor{\bparticle{Van} \bsnm{Casteren}, \binits{J.A.}},
\bauthor{\bsnm{Demuth}, \binits{M.}}:
\bbtitle{Stochastic Spectral Theory for Selfadjoint Feller Operators: A
Functional Integration Approach}.
\bpublisher{Birkh\"auser},
\blocation{Berlin}
(\byear{2000}).
\bid{doi={10.1007/978-3-0348-8460-0}, mr={1772266}}
\end{bbook}
%
%
\OrigBibText
%
\begin{bbook}
\bauthor{\bparticle{Van} \bsnm{Casteren}, \binits{J.A.}},
\bauthor{\bsnm{Demuth}, \binits{M.}}:
\bbtitle{Stochastic Spectral Theory for Selfadjoint Feller Operators: A
Functional Integration Approach.}
\bpublisher{Birkh\"auser},
\blocation{Berlin}
(\byear{2000})
\end{bbook}
%
\endOrigBibText
\bptok{structpyb}%
\endbibitem

\end{thebibliography}
\end{document}